\newtheorem{theorem}{Theorem}[section]
\newtheorem{corollary}{Corollary}[section]
\newtheorem{conjecture}{Conjecture}[section]
\newtheorem{definition}{Definition}[section]
\numberwithin{equation}{section}
\def\Z{\mathbb Z}
\def\R{\mathbb R}
\def\N{\mathbb N}
\def\d{\partial}
\def\a{\alpha}
\def\g{\gamma}
\def\s{\sigma}
\def\D{\Delta}
\def\om{\omega}
\title{Gromov's Amenable Localization and Geodesic Flows}
\author{Gabriel Katz}
\address{MIT, Department of Mathematics, 77 Massachusetts Ave., Cambridge, MA 02139, U.S.A.}
\email{gabkatz@gmail.com}
\begin{document}
\maketitle

\begin{abstract} 
Let $M$ be a compact smooth Riemannian $n$-manifold with boundary.  We combine Gromov's amenable localization technique with the Poincar\'{e} duality to study the {\sf traversally generic} geodesic flows on $SM$, the space of the spherical tangent bundle. Such flows generate  stratifications of $SM$, governed by rich universal combinatorics. The stratification reflects the ways in which the flow trajectories are tangent to the boundary $\d(SM)$. Specifically, we get lower estimates of the numbers of connected components of these flow-generated strata of any given codimension $k$ in terms of the normed homology $H_k(M; \R)$ and $H_k(DM; \R)$, where $DM = M\cup_{\d M} M$ denotes the double of $M$. The norms here are the {\sf simplicial semi-norms} in homology. The more complex the metric on $M$ is, the more numerous the strata of $SM$ and $S(DM)$ are. 
It turns out that the normed homology spaces form obstructions to the existence of globally $k$-{\sf convex} traversally generic metrics on $M$. 
We also prove that knowing the geodesic scattering map on $M$ makes it possible to reconstruct the stratified topological type of the space of geodesics, as well as the amenably localized Poincar\'{e} duality operators on $SM$.
\end{abstract} 

\section{Introduction}

This paper is an extension of \cite{AK} and especially of \cite{K3}. As in the latter articles, it draws its inspiration from the papers \cite{Gr}, \cite{Gr1} of Gromov, where the machinery of {\sf amenable localization} has been developed. Here we combine Gromov's amenable localization with the Poincar\'{e} duality operators (this combination was introduced in \cite{K3}) to study the {\sf traversally generic geodesic flows} on compact connected smooth Riemannian $n$-manifolds $M$ with boundary  (see Definition \ref{def2.3} and formula (\ref{eq2.4}) for the  definition of a traversally generic vector field.). For such flows, the metric on $M$ is {\sf non-trapping} (see Definition \ref{def3.1}). \smallskip

The application of the techniques from \cite{K3} to geodesic flows is relatively straightforward; however, the idea that such applications may be fruitful is a novelty.   
These applications reveal a new phenomenon: the {\sf simplicial norms} (see Definition \ref{def_simplicial}) of homology classes of a Riemannian manifold $M$ impose restrictions on the {\sf tangency patterns} of geodesic curves to its boundary $\d M$. A tangency pattern $\om =_{\mathsf{def}} (j_1, j_2, \dots , j_i,  \dots )$ is just an ordered finite string of natural numbers. They encode the degree of tangency of a geodesic curve $\g \subset M$ to the boundary $\d M$.

The fundamental groups of the boundary components of $\d M$ are assumed to be {\sf amenable}, while the non-trivial results arise only for manifolds $M$ with non-amenable fundamental group.\smallskip

For a given Riemannian metric $g$ on a compact manifold $M$, we denote by $v^g$ the geodesic vector field on $SM$, the space of the $g$-unitary spherical tangent bundle of $M$. 

In \cite{K5}, we have studied metrics $g$ on $M$ such that the geodesic field $v^g$ on $SM$ is of the {\sf gradient type}; that is, there exists a smooth Lyapunov function $F: SM \to \R$ with the property $dF(v^g) > 0$. By Theorem 2.1  and Corollary 2.3 from \cite{K5}, such metrics $g$ coincide with the class of non-trapping metrics on $M$. They form an open nonempty set $\mathcal G(M)$ in the space $\mathcal M(M)$ of all Riemannian metrics on $M$.\smallskip

In \cite{K5}, Definition 2.3, we also introduced the notion of {\sf boundary generic metrics} (see Definitions \ref{def2.2} and \ref{def3.1} below). For them, the boundary $\d M$ is ``generically curved" and the geodesics $\g \subset M$ do not have high order of tangency to $\d M$ in comparison to its dimension. We denote by $\mathcal B^\dagger(M)$ the space of boundary generic metrics, and by $\mathcal G^\dagger(M)$ the space of boundary generic {\it and} non-trapping metrics. If each component of $\d M$ is either strictly convex or concave in $g$, then the metric $g$ is boundary generic.  

Boundary generic metrics form an open set in the space $\mathcal M(M)$ of all metrics. Conjecturally, $\mathcal B^\dagger(M)$ is dense in $\mathcal M(M)$, and 
$\mathcal G^\dagger(M)$ is dense in the space $\mathcal G(M)$. \smallskip

Finally, we consider a subspace $\mathcal G^\ddagger(M) \subset \mathcal G^\dagger(M)$, formed by the {\sf traversally generic metrics} on $M$. Their Definition \ref{def2.3} is more involved. 
In short, a metric $g$ is traversally generic if the geodesic vector field $v^g$ on $SM$ is traversally generic with respect to the boundary $\d(SM)$ in the sense of \cite{K2}, Definition 3.2. 
Conjecturally, $\mathcal G^\ddagger(M)$ is also dense in the space $\mathcal G(M)$. It is proven to be open in $\mathcal M(M)$ (see \cite{K2}).
\smallskip

For a given metric $g \in \mathcal G^\dagger(M)$, let $\mathcal T(v^g)$ denote the space of trajectories of the $v^g$-generated flow on $SM$.  We call $\mathcal T(v^g)$  {\sf the space of geodesics} on $M$. We denote by $\Gamma: SM \to \mathcal T(v^g)$ the obvious map that takes each point $x \in SM$ to the $v^g$-trajectory through $x$. In general, $\mathcal T(v^g)$ is not a manifold, but, for $g \in \mathcal G^\ddagger(M)$, a compact $CW$-complex (\cite{K5}). \smallskip

For a smooth {\sf traversally generic vector field} $v$ on a compact $(d+1)$-manifold $X$ with boundary, the trajectory space $\mathcal T(v)$  acquires a stratification $\{\mathcal T(v, \om)\}_\om$, labeled by the combinatorial patterns of tangency $\om$ that belong to an {\sf universal poset} $\mathbf\Omega^\bullet_{'\langle d]}$  (see \cite{K2} and Section 2). This poset depends only on $d$. 

Similarly, for a traversally generic vector field $v^g$ on a compact $(2n-1)$-manifold $SM$, the $(2n-2)$-dimensional space of geodesics $\mathcal T(v^g)$  acquires a stratification $\{\mathcal T(v^g, \om)\}_\om$ by the combinatorial patterns of tangency $\om$ that belong to the universal poset $\mathbf\Omega^\bullet_{'\langle 2n-2]}$. 

The more numerous the connected components of these stratifications are, the more \emph{complex} the $v^g$-flow is, and thus the more complex (in relation to $\d M$) the metric $g$ on $M$ is. 

So, our goal here is to find \emph{lower bounds} of the numbers such connected components in terms of the topology of $SM$ or $M$. The key tool, enabling such estimates, has been developed in \cite{K3}  for arbitrary traversally generic flows. We call it ``{\sf Gromov's amenable localization of the Poincar\'{e} duality}". 
\smallskip

The $\mathbf\Omega^\bullet_{'\langle 2n-2]}$-stratification of the geodesic space $\mathcal T(v^g)$ generates the stratification \hfill\break $$\big\{SM(v^g, \om) =_{\mathsf{def}}  \Gamma^{-1}(\mathcal T(v^g, \om))\big\}_{\om \in \mathbf\Omega^\bullet_{'\langle 2n-2]}}$$ of $SM$ and the stratification $$\big\{\d(SM)(v^g,\om) =_{\mathsf{def}} SM(v^g, \om) \cap \d(SM)\big\}_\om$$ of its boundary $\d(SM)$. These stratifications can be refined by considering the {\sf connected components} of the sets $\{\d(SM)(v^g, \om),\; SM^\circ(v^g, \om)\}_\om$. Here $$SM^\circ(v^g, \om) =_{\mathsf{def}} SM(v^g, \om) \cap \textup{int}(SM).$$ 

We construct an auxiliary closed manifold, the {\sf double} $D(SM) =_{\mathsf{def}} SM \cup_{\d (SM)} SM$ of $SM$. The double comes equipped with an involution $\tau$ so that $(D(SM))^\tau = \d (SM)$ and the orbit-space $D(SM)/\{\tau\}$ is homeomorphic to $SM$. We stratify $D(SM)$ by the connected components of the following sets: $$\big\{\d(SM)(v^g, \om),\;\, SM^\circ(v^g, \om),\; \,\tau(SM^\circ(v^g, \om))\big \}_{\om \in \mathbf\Omega^\bullet_{'\langle 2n-2]}}$$

All these $v^g$-induced stratifications of $\mathcal T(v)$, $SM$, and $D(SM)$ are the foci of our investigation here. \smallskip

Let $SM^\circ_{-j}(g)$ denote the union of strata of codimension $j$ in $SM^\circ$. Similarly, let $D(SM)_{-j}(g)$ denote the union of codimension $j$ strata in $D(SM)$.\smallskip 

Let $\mathcal D$ stand for the Poincar\'{e} Duality operator (over the coefficient rings $\R$ or $\Z$) on an oriented manifold with boundary. For each tranversally generic metric $g$ on $M$, we introduce two {\sf localized} Poincar\'{e} Duality linear operators:
\[
\mathcal L_{j}(g; \R):\, H_{j}(D(SM); \R) \stackrel{\approx \mathcal D}{\longrightarrow}  H^{2n -1 -j}(D(SM); \R) \stackrel{i^\ast_{\mathsf{loc}}}{\longrightarrow}  
H^{2n -1 -j}(D(SM)_{-j}(g); \R),
\]
\[
\mathcal M_{j}(g; \R):\, H_{j}(SM; \R)\stackrel{\approx \mathcal D}{\longrightarrow}  H^{2n -1 -j}(SM, \d(SM); \R) 
\]
\[
 \stackrel{i^\ast_{\mathsf{loc}}}{\longrightarrow}  H^{2n -1 -j}(SM_{-j}(g), SM_{-j}(g) \cap \d(SM); \R),  
\]
where $i^\ast_{\mathsf{loc}}$ are the natural homomorphisms in the cohomology, induced by the corresponding embeddings of spaces.\smallskip

The source spaces of these operators come naturally equipped with the {\sf Gromov simplicial semi-norms} $\| \sim \|_\mathbf \D$ (\cite{Gr}). Let us recall their definition. 

\begin{definition}\label{def_simplicial}
Let $X$ be a topological space, and let $h \in H_j(X; \R)$ be a real homology class. 
We consider singular real cycles $c = \sum_{i=1}^\infty r_i \s_i$ that represent $h$, where the coefficients $r_i \in \R$ and $\s_i: \D^j \to X$ are singular $j$-simplexes, such that  each compact $K\subset X$ intersects only with the images $\{\s_i(\D^k)\}_i$ of finitely many singular simplexes $\s_i$. \smallskip

Put $\|c\|_{\ell_1} =_{\mathsf{def}}\; \sum_i |r_i|$.\footnote{For a non-compact $X$, $\|c\|_{\ell_1}$ may be infinite.} Then $$\|h\|_\mathbf\D =_{\mathsf{def}}\; \inf_{\{c\}} \{\|c\|_{\ell_1}\}.$$
 A similar definition is available for relative classes $h \in H_j(X, Y; \R)$, where $Y$ is a subspace of $X$. \hfill $\diamondsuit$
\end{definition}

We denote by $\tilde{\mathsf B}^{\mathbf \D}_{j}(X)$ the unit balls in these semi-norms. Thus $\tilde{\mathsf B}^{\mathbf \D}_{j}(X) \subset H_j(X; \R)$ is a convex set, possibly  non-compact. 

We will see that the $g$-dependent target spaces of the operators $\mathcal L_{j}(g; \R)$ and $\mathcal M_{j}(g; \R)$ may be also equipped with some norms $\| \, \sim \, \|^\bullet_{\mathbf\mho}$ (see the text that follows the proof of Theorem \ref{th11.15}). Their definition depends only on the codimension $j$ connected components of the $\mathbf\Omega^\bullet_{'\langle 2n-2]}$-stratifications of the  spaces $D(SM)$ and $SM$, respectively. For each $j$, the unit balls, $\diamondsuit^{(j)}_{\mathbf\mho}(SM, g)$ and $\diamondsuit^{(j)}_{\mathbf\mho}(D(SM), g)$, in these norms $\| \, \sim \, \|^\bullet_{\mathbf\mho}$ are compact convex polyhedra. They depend on the geodesic flow, and thus on the metric $g$. In fact, the balls are linear projections of some perfect polyhedra in the appropriate vector spaces (these perfect polyhedra are the duals of the hypercubes ``$\Box$"; thus the notation ``$\diamondsuit$"). \smallskip

Theorem \ref{th11.16} describes the images, under the localized Poincar\'{e} Duality operators $\mathcal L_{j}(g; \R)$ and $\mathcal M_{j}(g; \R)$, of the unitary spheres $\d\tilde{\mathsf B}^{\mathbf \D}_{j}(\sim)$ in Gromov's semi-norms. 


We stress that all these results require that the fundamental groups $\pi_1(M)$ and $\pi_1(DM)$ are quite big ({\sf non-amenable}); for the amenable fundamental groups, all our results are vacuous!  
\smallskip

For a given homology group $H_j(\sim; \R)$, we form its quotient space 
$$H^{\mathbf \D}_j(\sim; \R) =_{\mathsf{def}}\; H_j(\sim; \R)\big/ H_j^{\{\|\sim\|_\D = 0\}}(\sim; \R),$$ where the vector subspace $H_j^{\{\|\sim\|_\D = 0\}}(\sim; \R)$ is spanned by all the homology classes whose simplicial semi-norm vanishes. So there is an obvious epimorphism $H_j(\sim; \R) \to H^{\mathbf \D}_j(\sim; \R)$, which converts the semi-norm $\|\,\sim \, \|_{\mathbf \D}$ on $H_j(\sim; \R)$ into a {\it norm} $\|\,\sim \, \|_{\mathbf \D}^\bullet$ on $H^{\mathbf \D}_j(\sim; \R)$. We call the {\it normed} vector space $H^{\mathbf \D}_j(\sim; \R)$ ``{\sf the reduced $j$-homology}".
\smallskip
\smallskip

Theorem \ref{th11.16} implies that the kernels of the localized Poincar\'{e} Duality operators $\mathcal L_{j}(g; \R)$ and $\mathcal M_{j}(g; \R)$ are contained in the  spaces $H_j^{\{\|\sim\|_\D = 0\}}(D(SM); \R)$ and $H_j^{\{\|\sim\|_\D = 0\}}(SM; \R)$, respectively. 

This observation leads to our main result, Theorem \ref{th11.17}. It claims that, for $j \in [1, n]$, the ranks of the reduced homology groups $H^{\mathbf \D}_{j}(D(SM); \R) \approx H^{\mathbf \D}_{j}(DM; \R)$ give a \emph{lower bound} for the number of connected components of the codimension $j$ strata $$\{\d(SM)(v^g, \om),\; SM^\circ(v^g,\om),\  \tau(SM^\circ(v^g, \om)) \}_{\om}.$$ 

Similarly, for $j \in [1, n]$, the ranks of the reduced homology groups $H^{\mathbf \D}_{j}(SM; \R) \approx H^{\mathbf \D}_{j}(M; \R)$ give a lower bound for the number of connected components of the codimension $j$ strata $\{SM^\circ(v^g, \om) \cap \textup{int}(SM)\}_{\om}$. 

Both claims may be regarded as vague analogues of the Morse inequalities for the spaces of geodesics.\smallskip

The reader may glance at reveling Examples 3.1 and 3.2 to get a better feel for the claims of our main results, Theorem \ref{th11.16} and Theorem \ref{th11.17}. \smallskip

We say that a metric $g \in \mathcal G^\ddagger(M)$ is {\sf globally $j$-convex} if the $v^g$-induced stratification of $\mathcal T(v^g)$ has no strata of codimension $\geq j$ (compare this with Definition \ref{def2.3}). For example, if $\d M$ is strictly convex or concave in $g$,  then $g$ is globally $3$-convex.
By Corollaries \ref{cor11.A} and \ref{cor11.B}, the non-triviality of the groups $H^{\mathbf \D}_{j}(SM; \R)$ and $H^{\mathbf \D}_{j}(SM; \R)$ constitutes an \emph{obstruction} to the existence of a global $j$-convex metric $g \in \mathcal G^\ddagger(M)$. \smallskip

For $g \in \mathcal G^\ddagger(M)$, we are also investigating the connection of the localized Poincar\'{e} Duality operators with {\sf the inverse geodesic scattering problem}.
\smallskip 

The {\sf scattering map} $C_{v^g}: \d_1^+(SM) \to \d_1^-(SM)$, generated by the $v^g$-flow, takes a domain $\d_1^+(SM)$ in the boundary $\d(SM)$ to the complementary domain $\d_1^-(SM) \subset \d(SM)$, both domains being diffeomorphic and $g$-independent. 

Let us outline the construction of $C_{v^g}$. For any point $x \in \d M$ and any unit tangent vector $w \in T_xM$ that points inside of $M$ or is tangent to $\d M$, we consider the geodesic $\g \subset M$. We take the next along $\g$ point $x' \in \g \cap \d M$ and register the unit tangent to $\g$ vector $w' \in T_{x'}M$. By definition, $C_{v^g}(x, w) = (x', w')$. In fact, $\d_1^+(SM)$ is diffeomorphic to $\d_1^-(SM)$, the smooth topological types of both domains being $g$-independent. With a few exceptions, fundamentally, $C_{v^g}$ is a \emph{discontinuous} map! 

It turns out that $C_{v^g}$ allows for a reconstruction of the 
${\mathbf\Omega}^\bullet_{'\langle 2n-2]}$-\emph{stratified} topological type of $SM$ (see Theorem 3.3 from \cite{K5}). This reconstruction is an instance of a more general phenomenon, which we call ``The Holographic Principle" (see Theorem 3.1 from \cite{K4} and \cite{K6}). By applying the Holographic Principle to the geodesic flows, in Theorem \ref{th11.16a} below, we prove  that it is possible to reconstruct  the localized Poincar\'{e} Duality operators from the geodesic scattering map $C_{v^g}$.  
\section{Basics of Traversally Generic Vector Fields}

In an attempt to make this text more self-contained, we start with presenting few basic definitions and facts related to the traversing and traversally generic vector fields, as they appear in \cite{K1} - \cite{K5} and \cite{K7}. 
\smallskip

Let $X$ be a compact connected smooth $(n+1)$-dimensional manifold with boundary. A vector field $v$ is called {\sf traversing} if each $v$-trajectory is ether a closed interval with both ends residing in $\d X$, or a singleton also residing in $\d X$ (see \cite{K1} for the details). In particular, a traversing vector field does not vanish in $X$. In fact, $v$ is traversing if and only if $v \neq 0$ and $v$ is of the gradient type (\cite{K1}, Corollary 4.1).
\smallskip 

We denote by $\mathcal V_{\mathsf{trav}}(X)$ the space of traversing fields on $X$. \smallskip

For traversing fields $v$, the trajectory space $\mathcal T(v)$ is homology equivalent  to $X$ \cite{K1}. 
\smallskip

In this paper, we consider an important subclass of traversing fields which we call {\sf traversally generic} (see formula (\ref{eq2.4}) below and Definition 3.2 from \cite{K2}).

For a traversally  generic field $v$, the trajectory space $\mathcal T(v)$ is stratified by closed subspaces, labeled by the elements $\om$ of an \emph{universal} poset ${\mathbf\Omega}^\bullet_{'\langle n]}$.   It depends only on $\dim(X) = n+1$. The partial order ``$\succ_\bullet$" in ${\mathbf\Omega}^\bullet_{'\langle n]}$ mimics the bifurcations of real roots of real polynomials of degree $2n +2$ (see Section 2, for the definition and properties of ${\mathbf\Omega}^\bullet_{'\langle n]}$). The elements $\om \in \mathbf\Omega^\bullet_{'\langle n]}$ correspond to combinatorial patterns that describe the way in which $v$-trajectories $\g \subset X$ intersect the boundary $\d X$. Each intersection point $a \in \g \cap \d X$ acquires a well-defined {\sf multiplicity} $m(a)$, a natural number that reflects \emph{the order of tangency} of $\g$ to $\d X$ at $a$ (see \cite{K1} and Definition 2.1 for the expanded definition of $m(a)$). So $\g \cap \d X$ can be viewed as a \emph{divisor} $D_\g$ on the trajectory $\g$, an ordered set of points in $\g$ together with their multiplicities. Then $\om$ is just the ordered sequence of multiplicities $\{m(a)\}_{a \in \g \cap \d X }$, the order being prescribed by $v$. \smallskip

The support of the divisor $D_\g$ is either a singleton $a$, in which case $m(a) \equiv 0 \; \mod \, 2$, or the minimum and maximum points of $\sup D_\g$ have \emph{odd} multiplicities, and the rest of the points have \emph{even} multiplicities. \smallskip

Let 
\begin{eqnarray}\label{eq2.1}
m(\g) =_{\mathsf{def}} \sum_{a \in \g\, \cap \, \d X }\; m(a)\quad \text{and} \quad m'(\g) =_{\mathsf{def}} \sum_{a \in \g \, \cap \, \d X }\; (m(a) -1).
\end{eqnarray} 
Similarly, for $\om =_{\mathsf{def}} (j_1, j_2, \dots , j_i,  \dots )$, where $j_i \in \N$, we introduce the {\sf norm} and the {\sf reduced norm} of $\om$ by the formulas: 

\begin{eqnarray}\label{eq2.2}
|\om| =_{\mathsf{def}} \sum_i\; j_i \quad \text{and} \quad |\om|'  =_{\mathsf{def}} \sum_i\; (j_i -1).
\end{eqnarray}
\smallskip

Let $\d_jX =_{\mathsf{def}} \d_jX(v)$ denote the locus of points $a \in \d X$ such that the multiplicity of the $v$-trajectory $\g_a$ through $a$ at $a$ is greater than or equal to $j$. 

We may embed the compact manifold $X$ into an open manifold $\hat X$ of the same dimension, so that $v$ extends smoothly to a non-vanishing gradient-like vector field $\hat v$ in $\hat X$. We treat the extension $(\hat X, \hat v)$ as ``a germ at $(X, v)$". \smallskip

Now, the locus $\d_jX =_{\mathsf{def}} \d_jX(v)$ has a description in terms of an auxiliary function $z: \hat X \to \R$ that satisfies the following three properties:
\begin{eqnarray}\label{eq2.3}
\end{eqnarray}

\begin{itemize}
\item $0$ is a regular value of $z$,   
\item $z^{-1}(0) = \d X$, and 
\item $z^{-1}((-\infty, 0]) = X$. 
\end{itemize}

In terms of $z$, the locus $\d_jX =_{\mathsf{def}} \d_jX(v)$ is defined by the equations: 
$$\big\{z =0,\; \mathcal L_vz = 0,\; \ldots, \;  \mathcal L_v^{(j-1)}z = 0\big\},$$
where $\mathcal L_v^{(k)}$ stands for the $k$-th iteration of the Lie derivative operator $\mathcal L_v$ in the direction of $v$ (see \cite{K2}). 
The pure stratum $\d_jX^\circ \subset \d_jX$ is defined by the additional constraint  $\mathcal L_v^{(j)}z \neq 0$. The locus $\d_jX$ is the union of two loci: $(1)$ $\d_j^+X$, defined by the constraint  $\mathcal L_v^{(j)}z \geq  0$, and $(2)$ $\d_j^-X$, defined by the constraint  $\mathcal L_v^{(j)}z \leq  0$. The two loci, $\d_j^+X$ and $\d_j^-X$, share a common boundary $\d_{j+1}X$.
\smallskip

\begin{definition}\label{def2.1}
The {\sf multiplicity} $m(a)$, where $a \in \d X$, is the index $j$ such that $a \in \d_jX^\circ$. 

\hfill $\diamondsuit$
\end{definition}

\begin{definition}\label{def2.2}
The vector field $v$ on $X$ is called {\sf boundary generic}, if for all $j$ and each point $a \in \d_jX^\circ$,  there exists a neighborhood $V_a \subset \hat X$ of $a$ and some local coordinates $(u, \vec x, \vec y): V_a \to \R\times \R^{j-1} \times \R^{n-j}$ so that $\d X$ is given by the polynomial equation
\begin{eqnarray}\label{eq2.3a}
 \wp (u, \vec x) =_{\mathsf{def}} u^{j} + \sum_{l = 0}^{j-2} x_l\, u^l  = 0
 \end{eqnarray}
of  degree $j = m(a)$ in $u$, while $X$ by the polynomial inequality $\pm \wp (u, \vec x) \leq 0$. Each $v$-trajectory in $V$ is obtained by freezing the $\vec x, \vec y$ coordinates.
\hfill $\diamondsuit$
\end{definition}


The characteristic property of {\sf traversally generic} fields is that they admit special flow-adjusted coordinate systems, in which the boundary is given by  quite special polynomial equations (see formula (\ref{eq2.4})), and the trajectories are parallel to one of the preferred coordinate axis (see  \cite{K2}, Lemma 3.4). For a traversally generic $v$ on a $(n+1)$-dimensional $X$, the vicinity $U \subset \hat X$ of each $v$-trajectory $\g$ of the combinatorial type $\om = (j_1, j_2, \dots , j_i,  \dots )$ has a special coordinate system $$(u, \vec x, \vec y): U \to \R\times \R^{|\om|'} \times \R^{n-|\om|'}.$$  In these coordinates, by Lemma 3.4  and formula $(3.17)$ from \cite{K2}, the boundary $\d X$ is given  by the polynomial equation: 
\begin{eqnarray}\label{eq2.4}
 \wp (u, \vec x) =_{\mathsf{def}} \prod_{i=1}^{|\om| - |\om|'} \Big[(u-i)^{j_i} + \sum_{l = 0}^{j_i-2} x_{i, l}\,(u -i)^l \Big] = 0
 \end{eqnarray}
of an even degree $|\om|$ in $u$. Here  $i \in \Z_+$ runs over the distinct roots of  $\wp (u, \vec 0)$, and the vector $\vec x =_{\mathsf{def}} \{ x_{i, l}\}_{i,l}$ is sufficiently small.   
At the same time, $X$ is given by the polynomial inequality $\{\wp(u, \vec x) \leq 0\}$.  Each $v$-trajectory in $U$ is produced by freezing all the coordinates $\vec x, \vec y$, while letting $u$ to be free. 

Here we treat formula (\ref{eq2.4}) as {\it the working definition} of a traversally generic vector field; for a more conceptual definition see \cite{K2}.
\smallskip

We denote by $\mathcal V^\ddagger(X)$ the space of traversally  generic fields on $X$.  In fact,  $\mathcal V^\ddagger(X)$ is an \emph{open} and \emph{dense} (in the $C^\infty$-topology) subspace of $\mathcal V_{\mathsf{trav}}(X)$ (see \cite{K2}, Theorem 3.5).\smallskip 

We denote by $X(v, \om)$ the union of $v$-trajectories whose divisors are of a given combinatorial type $\om \in \mathbf\Omega^\bullet_{'\langle n]}$. Its closure $\bigcup_{\om' \preceq_\bullet \om}\; X(v, \om')$ is denoted by $X(v, \om_{\succeq_\bullet})$.
\smallskip

Each \emph{pure} stratum $\mathcal T(v, \om) \subset \mathcal T(v)$ is an open smooth manifold of dimension $n-|\om|'$ and, as such, has a ``conventional" tangent bundle. \smallskip

\begin{definition}\label{def2.3}
We say that a traversing field $v$ on $X$ is {\sf globally $k$-convex} if $m'(\g) < k$ for any $v$-trajectory $\g$. In other words, all strata $\mathcal T(v, \om)$ of codimension $\geq k$ are empty.  
\hfill $\diamondsuit$
\end{definition}

\section{The localized Poincar\'{e} Duality for geodesic flows}

We are now in position to apply Gromov's amenable localization to the study of a certain class of Riemmanian metrics (called ``traversally generic") on compact connected smooth manifolds $M$ with boundary. We will focus on the tangency patterns, exhibited by the geodesic curves with respect to the boundary $\d M$. We follow closely the general arguments  in \cite{K3}, as they apply to geodesic flows for non-trapping metrics.
\smallskip

Consider the unit spherical fibration $SM \to M$, associated with the tangent bundle $TM \to M$ and a Riemmanian metric $g$ on $M$.
\begin{definition}\label{def3.1} We say that a Riemmanian metric $g$ on $M$ is:
\begin{itemize}
\item{\sf of the gradient type} or {\sf non-trapping}, if the geodesic vector field $v^g$ on $SM$ is traversing,
\item {\sf boundary generic}, if the geodesic vector field $v^g$ on $SM$ is boundary generic (in the sense of Definition \ref{def2.2}) with respect to the boundary $\d(SM)$,
\item {\sf traversally generic}, if the geodesic vector field $v^g$ on $SM$ is traversally generic with respect to $\d(SM)$.\footnote{In particular, traversally generic metrics are boundary generic and of the gradient type.} 
\hfill $\diamondsuit$
\end{itemize}
\end{definition}

Since $v^g$ depends smoothly on $g$ and since the traversally generic fields form an open set in the space of all smooth vector fields on $SM$ (\cite{K5}, Theorem 2.2), we conclude that the traversally generic metrics $\mathcal M^\ddagger(M)$ form an open set in the space $\mathcal M(M)$ of all smooth Riemannian metrics on $M$. The question whether, for a given $M$, the space $\mathcal M^\ddagger(M)$ is nonempty remains wide open. For example, if $\d M$ is strictly convex, and $g$ is non-trapping, then $g$ is traversally generic. 

We conjecture that $\mathcal M^\ddagger(M)$ is actually \emph{dense} in the space of non-trapping metrics. 
\smallskip

Now, for a smooth compact manifold $M$ with boundary, we form its double $DM =_{\mathsf{def}} M \cup_{\d M}M$. We denote by $\rho_M: DM \to DM$ a smooth involution such that the quotient space $DM/\rho_M$ is homeomorphic to $M$. Let $$D(SM) =_{\mathsf{def}} SM \cup_{SM|_{\d M}} SM.$$ Let  $\rho_{SM}: D(SM) \to D(SM)$ be an involution such that the quotient $D(SM)/\rho_{SM}$ is homeomorphic to $SM$ and the fixed set $D(SM)^{\rho_{SM}} = SM|_{\d M}$, the restriction of the bundle $SM \to M$ to $\d M \subset M$. The construction of $D(SM)$ gives rise to a spherical fibration $q: D(SM) \to DM$ over $DM$.

{\it Warning:} $\rho_{SM}$ is not induced by the differential of $\rho_M$, and $q$ is not the tangent spherical bundle of $DM$, i.e., $D(SM) \neq S(DM)$! \smallskip

For a compact connected smooth Riemannian $n$-manifold $M$ with boundary, any \emph{traversally generic} metric $g$, via its geodesic flow $v^g$,  defines a stratification of the space $SM$ by the pure strata $\big\{SM(v^g, \om)\big\}_{\om \in \mathbf\Omega^\bullet_{' \langle 2n-2]}}$, which organize the $v^g$-trajectories by their tangency to $\d(SM)$ patterns $\{\om\}$. Let $$SM^\circ(v^g, \om) =_{\mathsf{def}} SM(v^g, \om) \cap \textup{int}(SM).$$
In turn, these strata generate the filtration
\begin{eqnarray}
\Big\{SM^\circ_{-(k+1)}(g) =_{\mathsf{def}} 
 \bigcup_{\om \in \mathbf\Omega^\bullet \big|\, |\om|' \geq k+1} SM^\circ(v^g, \om)\Big\}_k
 \end{eqnarray}
 of $SM \setminus \d(SM)$ and the filtration
\begin{eqnarray}
\Big\{SM_{-(k+1)}(g) =_{\mathsf{def}} 
 \Big(\bigcup_{\om \in \mathbf\Omega^\bullet \big|\, |\om|' \geq k+1} SM^\circ(v^g, \om)\Big) \nonumber \\
  \bigcup \Big( \bigcup_{\om \in \mathbf\Omega^\bullet \big|\, |\om|' \geq k} \d(SM) \cap SM(v^g, \om)\Big)\Big\}_k
 \end{eqnarray}
 of $SM$. The filtration $\{SM_{-(k+1)}(g)\}$ induces the $\rho_{SM}$-equivariant filtration $\{D(SM)_{-(k+1)}(g)\}$ of the double $D(SM) \approx S(DM)$.\smallskip

 \smallskip

For any commutative ring $\mathsf R$, we introduce the free  $\mathsf R$-modules: 
\[
\mathsf C_{\mathbf\mho}^{2n-1-j}\big(D(SM), g; \mathsf R\big) =_{\mathsf{def}}\; H^{2n-1-j}\big(D(SM)_{-j}(g),\, D(SM)_{-(j+1)}(g);\; \mathsf R \big)
\]
and 
\[
\mathsf C_{\mathbf\mho}^{2n-1-j}\big(SM^\circ, g; \mathsf R\big) =_{\mathsf{def}}\;
 H^{2n-1-j}\big(SM_{-j}(g),\; SM_{-(j+1)}(g) \cup \big(SM_{-j}(g) \cap \d(SM)\big);\; \mathsf R \big).
\]

So each traversally generic metric $g$ on $M$ gives rise to a {\sf differential complex} 
\begin{eqnarray}\label{eq11.A}
\mathbf C_{\mathbf\mho}^\ast\big(D(SM),\, g;\, \mathsf R\big) =_{\mathsf{def}} 
\Big\{0 \to \mathsf C_{\mathbf\mho}^{0}\big(D(SM), g; \mathsf R\big) \stackrel{\delta_0}{\longrightarrow} \mathsf C_{\mathbf\mho}^{1}\big(D(SM),\, g;\; \mathsf R\big) \stackrel{\delta_1}{\longrightarrow} \dots \nonumber \\ 
\dots \stackrel{\delta_{2n-2}}{\longrightarrow} \mathsf C_{\mathbf\mho}^{2n-1}\big(D(SM), g; \mathsf R\big) \to 0\Big\}, 
\end{eqnarray}
where the differentials $\{\delta_j\}$ are the boundary homomorphisms from the long exact cohomology sequences of the triples 
$$\big\{D(SM)_{-(j -1)}(g) \supset  D(SM)_{-j}(g) \supset D(SM)_{-(j +1)}(g)\big\}_j.$$

Similarly, $g$  produces the differential complex 
\begin{eqnarray}\label{eq11.B}
\mathbf C_{\mathbf\mho}^\ast\big(SM^\circ, g;\, \mathsf R\big) =_{\mathsf{def}} 
\Big\{0 \to \mathsf C_{\mathbf\mho}^{0}\big(SM^\circ, g; \mathsf R\big) \stackrel{\delta_{0}}{\longrightarrow} \mathsf C_{\mathbf\mho}^{1}\big(SM^\circ, g; \mathsf R\big) \stackrel{\delta_{1}}{\longrightarrow} \dots \nonumber \\ 
\dots \stackrel{\delta_{2n-2}}{\longrightarrow} \mathsf C_{\mathbf\mho}^{2n-1}\big(SM^\circ, g; \mathsf R\big) \to 0\Big\},
\end{eqnarray}
where the differentials $\{\delta_j\}$ are the boundary homomorphisms from the long exact homology sequences of the triples $$\big\{SM_{-(j -1)}(g) \cup \d(SM)\, \supset \, SM_{-j}(g)  \cup \d(SM)\, \supset \, SM_{-(j +1)}(g) \cup \d(SM)\big\}_j.$$

We denote by $$\mathsf B_{\mathbf\mho}^{2n-1-j}\big(SM^\circ, g; \mathsf R\big) \subset \mathsf C_{\mathbf\mho}^{2n-1-j}\big(SM^\circ, g; \mathsf R\big)$$ the image of the differential $\delta_{2n-2-j}$ from (\ref{eq11.B}). Similarly, let $$\mathsf B_{\mathbf\mho}^{2n-1-j}\big(D(SM), g; \mathsf R\big) \subset \mathsf C_{\mathbf\mho}^{2n-1-j}\big(D(SM), g; \mathsf R\big)$$ stand for the image of the differential $\delta_{2n-2-j}$ from (\ref{eq11.A}).

\begin{conjecture}\label{conj11.A} Let $g$ be a traversally generic Riemmanian metric on a compact connected smooth $n$-manifold $M$ with boundary. Then, as described above, the metric $g$ generates the differential complexes $\mathbf C_{\mathbf\mho}^\ast\big(SM^\circ, g; \mathsf R\big)$ and $\mathbf C_{\mathbf\mho}^\ast\big(D(SM), g; \mathsf R\big)$ of free $\mathsf R$-modules. 
\smallskip

We conjecture that the homology groups of these differential complexes depend only on the connected component of the space of traversally generic metrics on $M$, to which $g$  belongs. 
\hfill $\diamondsuit$
\end{conjecture}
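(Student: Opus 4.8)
\textbf{Proof proposal for Conjecture \ref{conj11.A}.}

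The plan is to establish two things separately: first, that the cohomology groups $\mathsf C_\mho^{2n-1-j}(\,\cdot\,, g; \mathsf R)$ are indeed free $\mathsf R$-modules and that the maps $\delta_j$ compose to zero, and second, that the resulting homology is a deformation invariant of $g$. For the first point I would argue locally: by the structure theorem for traversally generic fields (formula (\ref{eq2.4}) and \cite{K2}, Lemma 3.4), each stratum $SM(v^g,\omega)$ of codimension $|\omega|'=j$ is a smooth submanifold of $SM$ that sits inside $SM_{-j}(g)$ as its ``top'' stratum, with $SM_{-(j+1)}(g)$ a closed subset, so that $SM_{-j}(g)\setminus SM_{-(j+1)}(g)$ is a disjoint union of open manifolds indexed by the codimension-$j$ connected components of the stratification. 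The key geometric input, already implicit in the construction of the norms $\|\sim\|^\bullet_\mho$ in the introduction, is that each such component is homeomorphic to a product of a contractible (polyhedral) fiber with a manifold, so that the relevant relative cohomology group $H^{2n-1-j}(SM_{-j}(g), SM_{-(j+1)}(g)\cup(\cdots);\mathsf R)$ reduces — via excision and the Thom isomorphism for the normal bundle of the stratum — to a direct sum over components of copies of $\mathsf R$ (with a twist by the orientation local system, which for the cubes-dual polyhedra in play is trivializable). Freeness then follows immediately, and $\delta_{j}\circ\delta_{j-1}=0$ is automatic because these are the connecting homomorphisms of the exact sequences of the nested triples of the filtration — this is a standard feature of the spectral sequence of a filtered space, and one should simply identify $\mathbf C^\ast_\mho$ with the $E_1$-page (or, dually, the $E_1$-chain complex) of that spectral sequence.

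For the invariance statement, let $g_0, g_1$ be two traversally generic metrics joined by a smooth path $\{g_t\}_{t\in[0,1]}$ inside one connected component of $\mathcal M^\ddagger(M)$. Since $\mathcal M^\ddagger(M)$ is open in $\mathcal M(M)$, by a transversality/general-position argument (as in \cite{K2}, Section 3, or the analysis of \cite{K4}) the path can be perturbed rel endpoints to one that is itself ``generic'', meeting the walls of the stratification of the space of fields transversally in finitely many points $t_1<\dots<t_N$. Away from the $t_i$ the stratified homeomorphism type of $(SM, \{SM(v^{g_t},\omega)\}_\omega)$ is locally constant — this is the structural stability of traversally generic fields, which follows from the openness statement combined with the fact that a small $C^\infty$-perturbation of a traversally generic field induces a stratified homeomorphism onto its image (cf. \cite{K2}, Theorem 3.5 and the ``Holographic Principle'' circle of ideas). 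Hence on each interval $(t_{i}, t_{i+1})$ the complexes $\mathbf C^\ast_\mho(\,\cdot\,, g_t; \mathsf R)$ are canonically isomorphic, and it remains to check that passing through a single wall $t_i$ — where exactly one elementary bifurcation of the root patterns in $\Omega^\bullet_{'\langle 2n-2]}$ occurs, governed by the partial order $\succ_\bullet$ — induces an isomorphism on homology. For this I would produce, in a neighborhood of $t_i$, an explicit chain homotopy equivalence: the bifurcation is modeled on a one-parameter family of polynomials of controlled degree, so the change in the filtration $\{SM_{-k}(g_t)\}$ is supported in an arbitrarily small neighborhood of the affected stratum, and a Mayer–Vietoris / five-lemma argument comparing the long exact sequences of the triples before and after the wall crossing yields the desired isomorphism.

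The main obstacle I expect is precisely the wall-crossing step. While the structural stability away from walls and the local freeness are essentially bookkeeping built on \cite{K2}, the claim that an elementary bifurcation in $\Omega^\bullet_{'\langle 2n-2]}$ does not alter the $\mho$-homology requires a genuine local computation: one must show that the two local models of the filtered pair (before and after the collision of roots) have the same relative cohomology in all relevant triples, with compatible connecting maps. This is believable because the bifurcation only trades a higher-codimension stratum against a boundary component of a lower-codimension one while preserving the overall incidence structure, and the contractibility of the polyhedral fibers should make the local contributions cancel in pairs; but making this precise demands a careful stratified analysis of the universal polynomial models of (\ref{eq2.4}) near a wall, which is exactly the kind of combinatorial-geometric input that \cite{K4} develops and that I would lean on. A secondary subtlety is orientation/coefficient issues: over a general commutative ring $\mathsf R$ one must be sure the Thom isomorphisms used above do not introduce sign local systems that obstruct freeness — here one uses that the normal bundles of the strata, being associated to the cubes-dual perfect polyhedra, carry canonical orientations, so no such obstruction arises.
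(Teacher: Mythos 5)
Be aware, first, that the statement you are proving is stated in the paper as Conjecture \ref{conj11.A} and is left open there: the paper offers no proof, so your proposal has to stand entirely on its own. Its first half does stand: freeness of the modules $\mathsf C_\mho^{2n-1-j}(\,\cdot\,,g;\mathsf R)$ via excision/Thom-type identification with sums of copies of $\mathsf R$ indexed by the codimension-$j$ components (this is the same argument the paper borrows from \cite{K4}, p.~516, for the isomorphisms in (\ref{eq11.AA})), and $\delta_j\circ\delta_{j-1}=0$ because the $\delta_j$ are the connecting homomorphisms of a filtration, i.e.\ one is looking at an $E_1$-page. But this is not the conjectural content; the conjecture lives entirely in the invariance claim, and there your argument has a structural flaw rather than just a missing computation.

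The wall-crossing scheme is inconsistent with the hypothesis you are given. The ``walls'' are precisely the loci where the geodesic field fails to be traversally generic, hence they lie outside $\mathcal M^\ddagger(M)$; a path joining $g_0$ and $g_1$ \emph{inside one connected component of} $\mathcal M^\ddagger(M)$ meets no walls at all, so your finitely-many-transversal-crossings picture is vacuous, and if you perturb the path so that it does cross walls you may have left the component, where the conclusion is not asserted (wall crossings are exactly where one expects the homology to be able to jump, which is why the conjecture is restricted to a single component). What must actually be proven is that along a path of traversally generic metrics the complexes $\mathbf C_\mho^\ast$ change only through moves that preserve homology --- equivalently, a structural-stability or bifurcation analysis \emph{inside} $\mathcal M^\ddagger(M)$. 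Your substitute for this, the claim that a small perturbation of a traversally generic field induces a stratification-preserving homeomorphism, is not contained in \cite{K2}, Theorem 3.5 (which gives openness and density only); if such stratified structural stability were available, the conjecture would follow at once, so citing it begs the question. As your own last paragraph concedes, the local analysis of how the filtration $\{SM_{-k}(g_t)\}$ and the component structure of the strata can change along such a path is exactly what is missing --- and that is the conjecture itself, not a routine verification. So the proposal is a reasonable programme, but it does not close the statement.
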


The next theorem claims that, for a non-trapping metric $g$, the differential complexes $\mathbf C_{\mathbf\mho}^\ast\big(SM^\circ, g\big)$ and $\mathbf C_{\mathbf\mho}^\ast\big(D(SM), g\big)$ can be reconstructed from the scattering map $C_{v^g}$. We call such reconstructions ``{\sf holographic}" since the objects that are affiliated with the $(2n-1)$-dimensional bulk $SM$ or $S(DM)$ and the geodesic flow are recorded on the pair $\d_1^+(SM),\hfill \break \d_1^-(SM)$ of diffeomorphic $(2n-2)$-dimensional screens.

\begin{theorem}\label{th11.15} Let $g$ be a traversally generic Riemmanian metric on a compact connected smooth $n$-manifold $M$ with boundary. \smallskip

The differential complexes $\mathbf C_{\mathbf\mho}^\ast\big(SM^\circ, g; \mathsf R\big)$ and $\mathbf C_{\mathbf\mho}^\ast\big(D(SM), g; \mathsf R\big)$ can be reconstructed from the geodesic scattering map $C_{v^g}: \d_1^+(SM) \to \d_1^-(SM)$.
\end{theorem}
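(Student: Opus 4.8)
\textbf{Proof proposal for Theorem \ref{th11.15}.}

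The plan is to invoke the Holographic Principle (\cite{K5}, Theorem 3.1, and its geodesic-flow incarnation \cite{K7}, Theorem 3.3) as a black box, and then check that every ingredient entering the definition of the two differential complexes $\mathbf C_\mho^\ast\big(SM^\circ, g; \mathsf R\big)$ and $\mathbf C_\mho^\ast\big(D(SM), g; \mathsf R\big)$ is a function \emph{only} of the stratified topological type of the pair $(SM, \d(SM))$ together with its $\Omega^\bullet_{'\langle 2n-2]}$-stratification, and that the involution $\rho_{SM}$ on $D(SM)$ is likewise recoverable. First I would recall precisely what the Holographic Principle delivers: from the causality (scattering) map $C_{v^g}: \d_1^+(SM) \to \d_1^-(SM)$, together with the tautological stratification that $v^g$ induces on $\d(SM)$ by the loci $\{\d_j(SM)\}$ and their $\pm$-refinements, one reconstructs $SM$ up to a stratified homeomorphism respecting the filtration $\{SM_{-k}(g)\}$ and the induced boundary filtration $\{\d(SM) \cap SM_{-k}(g)\}$. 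In particular the quotient $\mathcal T(v^g)$ with its $\Omega^\bullet$-stratification, and hence the preimage stratification of $SM$, are determined. This is exactly the data used to define $SM_{-j}(g)$, $SM_{-(j+1)}(g)$, and $SM_{-j}(g)\cap\d(SM)$.

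Next I would argue, step by step, that the complexes are built functorially from that data. The graded pieces
\[
\mathsf C_\mho^{2n-1-j}\big(D(SM), g; \mathsf R\big) = H^{2n-1-j}\big(D(SM)_{-j}(g), D(SM)_{-(j+1)}(g); \mathsf R\big)
\]
and the analogous relative groups for $SM^\circ$ are singular (or \v{C}ech) cohomology groups of pairs of spaces that we have just reconstructed; cohomology is a homeomorphism invariant, so these $\mathsf R$-modules are determined, and the differentials $\delta_j$ are the connecting homomorphisms of the long exact sequences of the reconstructed triples $\{SM_{-(j-1)}(g) \supset SM_{-j}(g) \supset SM_{-(j+1)}(g)\}$ — again natural in the stratified homeomorphism type. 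For the double one needs in addition the involution: I would note that $D(SM)$ and $\rho_{SM}$ are obtained tautologically from the pair $(SM, \d(SM))$ by the doubling construction $D(SM) = SM \cup_{\d(SM)} SM$, and that the filtration $\{D(SM)_{-k}(g)\}$ was \emph{defined} in the excerpt as the $\rho_{SM}$-equivariant extension of $\{SM_{-k}(g)\}$; so once $(SM, \d(SM))$ with its filtration is in hand, $D(SM)$ with its filtration, and thus $\mathbf C_\mho^\ast\big(D(SM), g; \mathsf R\big)$, come for free.

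The main obstacle, and the place where care is genuinely required, is the first step: verifying that the version of the Holographic Principle available reconstructs not merely the homotopy type or the bare homeomorphism type of $SM$, but the homeomorphism type \emph{together with the full $\Omega^\bullet_{'\langle 2n-2]}$-stratification by the multiplicity loci of $v^g$}, since the differentials $\delta_j$ are sensitive to exactly which subspace is $SM_{-(j+1)}(g)$ inside $SM_{-j}(g)$. I would handle this by quoting Theorem 3.3 of \cite{K7} in its strongest stratified form and, if necessary, observing that the scattering map $C_{v^g}$ records the multiplicity $m(a)$ of each boundary tangency point (it is discontinuous precisely across the loci $\d_{\geq 2}(SM)$, and the pattern of discontinuity encodes $\omega$), so that the stratification of $\d(SM)$ — and then, by pulling back along the trajectory identification $\Gamma$, of all of $SM$ — is an intrinsic feature of the data. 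A secondary, purely bookkeeping point is to make sure the two triples written in the excerpt (one phrased via cohomology sequences, one via homology sequences with the $\d(SM)$ term adjoined) are both manifestly natural under stratified homeomorphisms fixing $\d(SM)$ setwise; this is immediate once stated, but should be spelled out so that the reduction to \cite{K5}, Theorem 3.1 is airtight. No finer geometric input — no metric, no actual geodesics — is needed beyond what $C_{v^g}$ already encodes, which is the whole point.
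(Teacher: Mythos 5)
Your proof is correct and follows essentially the same route as the paper: invoke the Holographic Principle (Theorem 3.3 of \cite{K7}, an instance of Theorem 3.1 of \cite{K5}) to recover the stratified topological type of $(SM, \d(SM))$ from $C_{v^g}$, observe that $D(SM)$ with its $\rho_{SM}$-equivariant filtration is then determined tautologically by doubling, and conclude that the complexes $\mathbf C_\mho^\ast$, being built from cohomology of pairs and connecting homomorphisms of triples in these filtrations, are invariants of the stratified homeomorphism type. The only thing the paper makes explicit that you elide is the remark that a traversally generic $v^g$ is in particular boundary generic and of gradient type, which is what licenses the application of the cited holography theorem; conversely you spell out the functoriality bookkeeping a bit more than the paper does.
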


\begin{proof} Let $\mathcal F(v^g)$ be the oriented 1-dimensional foliation on $SM$, produced by the $v^g$-flow. Any traversally generic geodesic field $v^g$ is automatically boundary generic and of the gradient (non-trapping) type. If $g$ is non-trapping and the geodesic field $v^g$ is boundary generic, then by Theorem 3.3 from \cite{K5}, the scattering map $C_{v^g}: \d_1^+(SM) \to \d_1^-(SM)$ allows for a reconstruction of the pair $(SM, \mathcal F(v^g))$, up to a homeomorphism of $SM$ which is the identity on the boundary $\d(SM)$. That  homeomorphism preserves the stratification $\mathcal S^\bullet_{v^g}(SM)$, whose strata are the connected components of the stratification $\mathcal S_{v^g}(SM) = \{\d(SM)(v^g, \om),\; SM^\circ(v^g, \om)\}_\om$. Therefore, the topological types of the stratifications $\mathcal S^\bullet_{v^g}(SM^\circ)$ and $\mathcal S^\bullet_{v^g}(D(SM))$  are determined by $C_{v^g}$. As a result, the differential complexes $\mathbf C_{\mathbf\mho}^\ast\big(SM^\circ, g; \mathsf R\big)$ and $\mathbf C_{\mathbf\mho}^\ast\big(D(SM), g; \mathsf R\big)$, whose construction depends only on the \emph{stratified} topological types of the spaces $SM$ and $D(SM)$, can be reconstructed from the geodesic scattering map $C_{v^g}$ along the lines of \cite{K4}.
\hfill 
\end{proof}

Abusing notations, we denote by $C_{\mathbf\mho}^j\big(D(SM), g; \Z \big)$ the image of $C_{\mathbf\mho}^j\big(D(SM), g; \Z \big)$ in the vector space $C_{\mathbf\mho}^j\big(D(SM), g; \R\big)$, viewed as an integral lattice. Similarly, we consider the integral lattice $C_{\mathbf\mho}^j\big(SM^\circ, g; \Z \big) \subset C_{\mathbf\mho}^j\big(SM^\circ, g; \R \big)$. The integral lattice $C_{\mathbf\mho}^j\big(SM, g; \Z \big)$ comes equipped with a \emph{basis} whose vectors correspond to the connected components of the strata 
$$\big\{SM(v^g, \om) \cap \textup{int}(SM)\big\}_{\{\om \in \mathbf\Omega^\bullet \big | 
|\om|' = j\}}.$$   

Similarly, the integral lattice $C_{\mathbf\mho}^j\big(D(SM), g; \Z \big)$ comes equipped with a basis whose vectors correspond to the connected components of the strata 
$$\big\{SM(v^g, \om)\big\}_{\{\om \in \mathbf\Omega^\bullet \big | |\om|' = j\}},$$ each stratum being considered twice, together with the connected components of the strata $$\big\{SM(v^g, \om) \cap \d(SM)\big\}_{\{\om \in \mathbf\Omega^\bullet \big | 
|\om|' = j - 1\}}.$$  

Using these bases, we introduce the $l_1$-norms $\| \sim \|_{\mathbf\mho}$ in the vector spaces  $C_{\mathbf\mho}^j\big(SM^\circ, g; \R \big)$ and $C_{\mathbf\mho}^j\big(D(SM), g; \R \big)$ so that the basic vectors (which belong to the lattices $C_{\mathbf\mho}^j\big(SM^\circ, g; \Z \big)$ and $C_{\mathbf\mho}^j\big(D(SM), g; \Z \big)$, respectively) have lengths $1$. The unit balls in these norms are the perfect polyhedra, dual to the hypercubes in the corresponding spaces. \smallskip

The semi-norms $\| \sim \|_{\mathbf\mho}$ induce ``honest" norms $\| \sim \|_{\mathbf\mho}^\bullet$ in the quotient spaces\footnote{Recall that the ``quotient norm" of a given vector $\vec V$ in the quotient space is defined to be the infimum of the norms of all the vectors (in the original space) that represent $\vec V$.} $$C_{\mathbf\mho}^j\big(SM^\circ, g; \R \big)\big/ B_{\mathbf\mho}^j\big(SM^\circ, g; \R \big) \;\; \text{and} \; \; C_{\mathbf\mho}^j\big(D(SM), g; \R \big)\big/ B_{\mathbf\mho}^j\big(D(SM), g; \R \big),$$ respectively.\smallskip 

We denote by $\diamondsuit_{\mathbf\mho}^j\big(SM^\circ, g \big)$ and 
$\diamondsuit_{\mathbf\mho}^j\big(D(SM), g \big)$ the unit balls in these quotient norms $\| \, \sim \, \|_{\mathbf\mho}^\bullet$. They are convex hulls of the images, under the quotient maps, of the verticies of the perfect polyhedron $\{\| \, \sim \, \|_{\mathbf\mho} = 1\}$.\smallskip

The manifolds $SM$ and $S(DM)$ are orientable. 
So the Poincar\'{e} Duality is available for their homology and cohomology with  coefficients in $\R$ or $\Z$.  As in \cite{K3} (where we dealt with arbitrary traversally generic flows), for each $j$, we consider the \emph{localized} Poincar\'{e} Duality operators over the coefficient rings $\mathsf R = \Z, \R$ \footnote{We have suppressed in (\ref{eq11.AA}) the dependence of homology and cohomology on the coefficients $\mathsf R$.}:
%
\[
\mathcal L_{j}(g): H_{j}(D(SM)) \stackrel{\approx \mathcal D}{\longrightarrow}  H^{2n -1 -j}(D(SM)) \stackrel{i^\ast_{\mathsf{loc}}}{\longrightarrow}  H^{2n-1 -j}\big(D(SM)_{-j}(g)\big)
\]
\[
\; \approx \; 
C_{\mathbf\mho}^{2n -1 -j}(D(SM), g)\Big/ B_{\mathbf\mho}^{2n -1 -j}(D(SM), g),
\]
\[
\mathcal M_{j}(g): H_{j}(SM)\stackrel{\approx \mathcal D}{\longrightarrow}  H^{2n -1 -j}(SM, \d(SM))  
\stackrel{i^\ast_{\mathsf{loc}}}{\longrightarrow}  \\
 H^{2n-1 -j}\big(SM_{-j}(g), SM_{-j}(g) \cap \d(SM)\big)
 \]
 \[
\;  \approx \;  C_{\mathbf\mho}^{2n -1 - j}(SM^\circ, g)\Big/ B_{\mathbf\mho}^{2n -1 - j}(SM^\circ, g).  
 \]
\begin{eqnarray}\label{eq11.AA}
\end{eqnarray}
Here the natural homomorphisms $i^\ast_{\mathsf{loc}}$ are induced by the inclusions of the strata $$SM^\circ_{-j}(g) \subset  SM \; \text{and} \; D(SM)_{-j}(g) \subset D(SM),$$ thus the term ``localized" in the names of the two operators. 
\smallskip

The RHS of isomorphisms ``$\approx$" in (\ref{eq11.AA}) can be justified exactly by the same homological argument as in \cite{K3}, page 516;  just replace an arbitrary traversally generic vector fields $v$ on $X$ with the geodesic traversally generic vector fields $v^g$ on $SM$. \smallskip   

$\mathcal L_{j}(g; \R)$ maps the lattice $H_{j}(D(SM); \Z)$ to the lattice  $H^{2n-1 -j}\big(D(SM)_{-j}(g); \Z\big)$.  Similarly, $\mathcal M_{j}(g; \R)$ maps the lattice $H_{j}(SM; \Z)$ to the lattice $H^{2n-1 -j}\big(SM^\circ_{-j}(g); \Z\big)$.\smallskip

We denote by $\tilde{\mathsf B}^{\mathbf \D}_{j}(D(SM)) \subset H_{j}(D(SM); \R)$ and $\tilde{\mathsf B}^{\mathbf \D}_{j}(SM)  \subset H_{j}(SM; \R)$ the set of vectors whose simplicial semi-norms $|\sim|_{\mathbf \D}$ do not exceed $1$. These are the ``unit balls". Let $\d\tilde{\mathsf B}^{\mathbf \D}_{j}(D(SM))$ and $\d\tilde{\mathsf B}^{\mathbf \D}_{j}(SM)$ denote the sets of vectors whose semi-norms are equal $1$  (these ``spheres" may not be compact!).\smallskip

Note that, for any pair of vectors $V$ and $W$ such that $|W|_{\mathbf\D} = 0$, we get $|V+W|_{\mathbf\D} = |V|_{\mathbf\D}$.  Therefore, $| \sim |_{\mathbf\D}$ becomes a {\it norm} on the quotient $H^{\mathbf \D}_\ast(\sim; \R)$ of the homology space $H_\ast(\sim; \R)$ by the subspace $H^{\{\| \sim\|_{\mathbf \D} = 0\}}_\ast(\sim; \R)$ of vectors whose simplicial semi-norms vanish. \smallskip 

So we may form the compact convex ball $\mathsf B^{\mathbf \D}_{j}(D(SM)) \subset H^{\mathbf \D}_{j}(D(SM); \R)$, the image of $\tilde{\mathsf B}^{\mathbf \D}_{j}(D(SM))$ under the quotient map $H_{j}(D(SM); \R) \to H^{\mathbf \D}_{j}(D(SM); \R)$; similarly, we may form the compact convex ball $\mathsf B^{\mathbf \D}_{j}(SM) \subset H^{\mathbf \D}_{j}(SM; \R)$, the image of $\tilde{\mathsf B}^{\mathbf \D}_{j}(D(SM))$ under the quotient map $H_{j}(SM; \R) \to H^{\mathbf \D}_{j}(SM; \R)$.\smallskip

We will use the localized Poincar\'{e} Duality operators $\mathcal L_{j}(g; \R)$ and  $\mathcal M_{j}(g; \R)$ from (\ref{eq11.AA}) to project linearly the unit balls $\tilde{\mathsf B}^{\mathbf \D}_{j}(D(SM))$ and $\tilde{\mathsf B}^{\mathbf \D}_{j}(SM)$ on the $g$-dependent ``screens" $$C_{\mathbf\mho}^{2n -1 -j}(D(SM), g; \R)\big/ B_{\mathbf\mho}^{2n -1 -j}(D(SM), g; \R)$$ and $$C_{\mathbf\mho}^{2n -1 - j}(SM^\circ, g; \R)\big/ B_{\mathbf\mho}^{2n -1 - j}(SM^\circ, g; \R),$$ respectively. These screens are manufactured with the help various metrics $g \in \mathcal G^\ddagger(M)$. \smallskip

The next theorem, one of our main results, makes several claims about the geometry of these projections. 

\begin{theorem}\label{th11.16} Let $M$ be a compact connected smooth $n$-manifold with boundary, where $n \geq 3$. Let $j \in [0, n]$.

\begin{itemize}
\item Assume that, for each connected component of the boundary $\d M$, the image of its fundamental group in $\pi_1(DM)$ is amenable.  
\smallskip

Then there is a universal constant $\lambda = \lambda(n, j) \geq 1$ such that, for every $M$ and every traversally generic Riemannian metric $g$ on $M$, the image of the unit (in the simplicial semi-norm) sphere $\d \tilde{\mathsf B}^{\mathbf\D}_{j}(D(SM))$ 
under the localized Poincar\'{e} Duality operator $\mathcal L_{j}(g; \R)$, is contained in the complement to the radius $\lambda^{-1}$ ball $$\lambda^{-1} \cdot \diamondsuit_{\mathbf\mho}^{2n-1 -j}\big(D(SM), g \big).$$  

\item Assume that, for each connected component of the boundary $\d M$, the image of its fundamental group in $\pi_1(M)$ is amenable. 
\smallskip

Similarly,  there is a universal constant $\mu = \mu(n, j) \geq 1$ such that, for every $M$ and every traversally generic Riemannian metric $g$ on $M$, the image of the unit sphere $\d \tilde{\mathsf B}^{\mathbf\D}_{j}(SM)$ 
under the localized Poincar\'{e} Duality operator $\mathcal M_{j}(g; \R)$ is contained in the complement to the radius $\mu^{-1}$ ball $$\mu^{-1} \cdot \diamondsuit_{\mathbf\mho}^{2n-1-j}\big(SM, g \big).$$
\end{itemize}
\end{theorem}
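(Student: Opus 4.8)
The plan is to deduce both bullets from the general amenable-localization theorem of \cite{K4} for traversally generic flows, applied to the geodesic field $v^g$ on $SM$ and to its double on $D(SM)\approx S(DM)$. The point is that the present setup is a literal special case of the one in \cite{K4}: by Definition \ref{def3.1}, a traversally generic metric $g$ on $M$ is exactly a metric for which $v^g$ is a traversally generic vector field on the compact connected $(2n-1)$-manifold $SM$, and then $D(SM)$ is equivariantly diffeomorphic to $S(DM)$, a closed connected $(2n-1)$-manifold carrying the doubled traversally generic field, invariant under the doubling involution $\rho_{SM}$ with fixed locus $\d(SM)$ and quotient $SM$. Both $SM$ and $S(DM)$ are orientable, being unit sphere bundles of the orientable manifolds $TM$ and $T(DM)$, so the dualities entering (\ref{eq11.AA}) are available over $\R$. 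Consequently the operators $\mathcal L_j(g;\R)$, $\mathcal M_j(g;\R)$, the polyhedral $l_1$-norms $\|\sim\|_\mho$ on the screens together with their quotient balls $\diamondsuit^{2n-1-j}_\mho(\,\cdot\,,g)$, and the simplicial seminorms $|\sim|_{\mathbf\D}$ on $H_j(D(SM);\R)$ and $H_j(SM;\R)$ together with their unit balls and spheres, are precisely the objects \cite{K4} constructs for a general traversally generic field on a compact oriented $(2n-1)$-manifold with boundary; the right-hand isomorphisms of (\ref{eq11.AA}) hold by the homological argument of \cite{K4}, p.~516, read off verbatim with $v$ replaced by $v^g$. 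So it remains only to check the amenability hypotheses of the \cite{K4} theorem and then to invoke it.

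The amenability transfer is the only non-formal step, and it is where $n\ge 3$ enters. Since $n-1\ge 2$, the fiber $S^{n-1}$ of the bundle $SM\to M$ is $1$-connected, so the homotopy exact sequence gives $\pi_1(SM)\cong\pi_1(M)$, and likewise $\pi_1(D(SM))=\pi_1(S(DM))\cong\pi_1(DM)$. Each connected component of $\d(SM)=SM|_{\d M}$ is the restriction $SM|_{(\d M)_i}$ of the bundle over a connected component $(\d M)_i$ of $\d M$; it is connected, and $\pi_1(SM|_{(\d M)_i})\cong\pi_1((\d M)_i)$ by the same argument. Hence, under the two isomorphisms above, the image of $\pi_1(SM|_{(\d M)_i})$ in $\pi_1(D(SM))$ (resp.\ in $\pi_1(SM)$) is identified with the image of $\pi_1((\d M)_i)$ in $\pi_1(DM)$ (resp.\ in $\pi_1(M)$). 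Thus the hypothesis of the first bullet is exactly the amenability condition \cite{K4} imposes on $\d(SM)$ inside $D(SM)$, and the hypothesis of the second bullet is exactly the one \cite{K4} imposes on $\d(SM)$ inside $SM$ (and if these images are trivial the statements are trivial, as remarked in the introduction).

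Now invoke \cite{K4}. For $j$ in the stated ranges, which lie within the scope of that theorem, the \cite{K4} localization statement for the closed double $S(DM)$ with its involution furnishes a constant $\lambda\ge 1$, \emph{universal} over all such doubles (hence depending only on $n$ and $j$), with $\mathcal L_j(g;\R)\big(\d\tilde{\mathsf B}^{\mathbf\D}_j(D(SM))\big)$ disjoint from $\lambda\cdot\diamondsuit^{2n-1-j}_\mho(D(SM),g)$; set $\lambda(n,j):=\lambda$. Applying the same theorem to the manifold-with-boundary $SM$ and the field $v^g$ --- now with the relative duality $H_j(SM;\R)\stackrel{\mathcal D}{\longrightarrow}H^{2n-1-j}(SM,\d(SM);\R)$ and the relative screen $C^{2n-1-j}_\mho(SM^\circ,g;\R)/B^{2n-1-j}_\mho(SM^\circ,g;\R)$ --- gives a universal $\mu(n,j):=\mu\ge 1$ with $\mathcal M_j(g;\R)\big(\d\tilde{\mathsf B}^{\mathbf\D}_j(SM)\big)$ disjoint from $\mu\cdot\diamondsuit^{2n-1-j}_\mho(SM,g)$. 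As every $(M,g)$ with $\dim M=n$ and $g$ traversally generic yields such a pair $(SM,v^g)$, these constants are universal over all such $(M,g)$, as claimed.

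The substantive content --- Gromov's amenable localization, which is what forces the $\mathbf\D$-seminorm of a homology class to be bounded below in terms of the $\mho$-norm of its localization $i^\ast_{\mathsf{loc}}\mathcal D(\,\cdot\,)$ onto the low-codimension skeleton, exploiting the non-amenability built into the hypotheses --- is entirely internal to \cite{K4} and is used here as a black box. So the main obstacle in this argument is the compatibility bookkeeping of the first two paragraphs: verifying that $D(SM)$ is equivariantly the sphere bundle $S(DM)$ of the closed double, so that the $v^g$-induced stratification of $D(SM)$ (by connected components of $\d(SM)(v^g,\omega)$, $SM^\circ(v^g,\omega)$ and $\tau(SM^\circ(v^g,\omega))$) is literally the one to which \cite{K4} applies; and verifying that amenability of boundary-$\pi_1$-images survives pullback along $SM\to M$, which genuinely fails for $n=2$, where the $S^1$-fibers break the $\pi_1$-isomorphism and the reduction collapses.
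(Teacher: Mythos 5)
Your proposal follows the same route as the paper: it reduces to Theorem~4.2 of \cite{K4} for the traversally generic field $v^g$ on the $(2n-1)$-manifold $SM$, after verifying that the amenability hypotheses on $\d M$ inside $\pi_1(M)$ and $\pi_1(DM)$ transfer to $\d(SM)$ inside $\pi_1(SM)$ and $\pi_1(D(SM))$ via the $S^{n-1}$-fibrations. This transfer is exactly the paper's first step (done there via the long exact homotopy sequences, done by you via $\pi_1$-isomorphisms from $1$-connectedness of the fiber --- same substance). The orientability observation, the identification of the localized operators and the screens with the \cite{K4} constructions, and the ``universal over all $(M,g)$'' conclusion are all as in the paper.

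Two small remarks on where you diverge from the written proof. First, the paper spends a paragraph showing that the simplicial seminorm vanishes on the second summand of $H_\ast(SM)\approx H_\ast(M)\oplus H_{\ast-n+1}(M)$ (Conner--Floyd realization, bounding disk bundle, Gromov's $\pi_1$-isometry), thereby proving $H^{\mathbf\D}_\ast(SM)\approx H^{\mathbf\D}_\ast(M)$ and $H^{\mathbf\D}_\ast(D(SM))\approx H^{\mathbf\D}_\ast(DM)$ as normed spaces. You skip this; since Theorem~4.2 of \cite{K4} is stated with $h\in H_j(SM;\R)$, resp.\ $h\in H_j(D(SM);\R)$, the omission does not damage the proof of the present statement --- the isometry is really needed for Theorem~\ref{th11.17}, where the bounds are rewritten in terms of $H^{\mathbf\D}_j(M)$ and $H^{\mathbf\D}_j(DM)$. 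Second, you describe the double-bullet as obtained by applying \cite{K4} separately ``to the closed double $S(DM)$''; strictly, $D(SM)$ is closed, so there is no traversing field on it --- the \cite{K4} result is applied once to the manifold-with-boundary $(SM,v^g)$ and yields both inequalities, with the $D(SM)$-stratification being the $\rho_{SM}$-equivariant doubling of the $v^g$-stratification. This is a phrasing slip rather than a logical gap.
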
 

\begin{proof} We need to show that the hypotheses of Theorem \ref{th11.16} imply the validity of the hypotheses of Theorem 4.2  from \cite{K3} (see also Theorem 8.8 from \cite{K7}).  
The latter theorem applies to any traversally generic vector field $v$ on a connected compact smooth manifold $X$ with boundary; in particular, it applies to any traversally generic geodesic vector field $v^g$ on $SM$.\smallskip

Let us first discuss Theorem 4.2  from \cite{K3}, the foundation of this proof, at some length. In fact, that theorem is stated in \cite{K3} or \cite{K7} in a slightly different and less geometrical form than Theorem \ref{th11.16} here: it claims the validity of the inequality  $\| h \|_{\mathbf \D} \leq \lambda \cdot \| \mathcal L_{j}(g)(h) \|_{\mathbf\mho}^\bullet$ for every class $h \in H_j(D(SM); \R)$, and of the inequality  $\| h \|_{\mathbf \D} \leq \mu \cdot \| \mathcal M_{j}(g)(h)\|_{\mathbf\mho}^\bullet$ for every class $h \in H_j(SM; \R)$, where $\lambda \geq \mu \geq 1$ being some universal constants. These constants depend only on $\dim(M)$ and the index $j \in [0, 2n-1]$. We will see soon that, in fact, one gets non-trivial results only for $j \in [0, n]$.\smallskip

The inequality $\| h \|_{\mathbf \D} \leq \lambda \cdot \| \mathcal L_{j}(g)(h) \|_{\mathbf\mho}^\bullet$ may be interpreted as claiming  that the $\mathcal L_{j}(g; \R)$-image of the unit sphere $\d \tilde{\mathsf B}^{\mathbf\D}_{j}(D(SM))$ (in the simplicial semi-norm) is contained in the complement to the ball $\lambda^{-1} \cdot \diamondsuit_{\mathbf\mho}^{2n-1 -j}\big(D(SM), g \big)$ of radius $\lambda^{-1}$ in the target space. A similar interpretation is available for $\mathcal M_{j}(g; \R)$.\smallskip

Let us describe briefly the source of the universal constants $\lambda, \mu$, participating in these inequalities\footnote{This is the only place where the hypotheses that $v^g$ is traversally generic  (and not only boundary generic and of the gradient type) seems to be important!}. Recall that, in Section 2, we have introduced the model space $\mathsf E_{\om} \subset \R \times \R^{|\om|'} \times \R^{n-|\om|'}$, given by the polynomial inequality $\{\wp (u, \vec x) \leq 0\}$, where $\wp$ is defined by the LHS of (\ref{eq2.4}) and $\vec x \in \R^{|\om|'}$. 

Consider the obvious projection $p: \R \times \R^{|\om|'}\times \R^{n-|\om|'} \to \R^{|\om|'}\times \R^{n-|\om|'}$. The fibers of the projection $p: \mathsf E_{\om} \to \R^{|\om|'}\times \R^{n-|\om|'}$ are unions the $\d_u$-trajectories  in $\mathsf E_{\om}$. Therefore $\mathsf E_{\om}$ acquires a stratification 
$\big\{ \mathsf E_{\om} (\d_u, \hat\om) \big\}_{\hat\om}$, labeled by the elements $\hat\om \succeq \om$. 
They form  the sub-poset $\om_{\preceq} \subset \mathbf\Omega^\bullet_{'\langle 2n-2]}$. 
Put $$\d\mathsf E_{\om}(\d_u, \hat\om) =_{\mathsf{def}} \mathsf E_{\om}(\d_u, \hat\om) \cap \d\mathsf E_{\om}$$ and $$\mathsf E^\circ_{\om}(\d_u, \hat\om) =_{\mathsf{def}} \mathsf E_{\om}(\d_u, \hat{\om}) \cap \textup{int}(\mathsf E_{\om}).$$ We denote $\mathcal S^\bullet(\mathsf E_{\om}^\circ)$ the stratification of the space $\mathsf E^\circ_{\om}$  by the connected components of these strata.

The double $D\mathsf E_{\om}$ of $\mathsf E_{\om}$ is stratified by the connected components of $\big\{\d\mathsf E_{\om}(\d_u, \hat\om)\big\}_{\hat\om \in \om_{\prec}}$, together with the connected components of $\big\{\mathsf E^\circ_{\om}(\d_u, \hat\om)\big\}_{\hat\om \in \om_{\preceq}}$ and their images under the involution $\tau: D(\mathsf E_{\om}) \to D(\mathsf E_{\om})$ that is a part of the doubling construction.  We denote by $\mathcal S^\bullet(D(\mathsf E_\om))$ this stratification. 
\smallskip 

The universal constant $\mu$ is the maximum of the $\mathcal S^\bullet(\mathsf E_{\om}^\circ)$-{\sf stratified}\footnote{See \cite{Gr1} and \cite{AK} for an accurate definition of the stratified simplicial norm.} relative (to their boundaries) simplicial norms of small $j$-disks, each one being normal to a particular connected component of the $(2n+1-j)$-dimensional strata $\{\mathsf E^\circ_{\om}(\d_u, \hat\om)\}_{\om, \,\hat\om \in \om_{\preceq}}$. The maximum being taken over all pairs $\om \preceq \hat \om$, where $\om \in \mathbf\Omega^\bullet_{'\langle 2n-2]}$.

The universal constant $\lambda$ is the maximum of the $\mathcal S^\bullet(D(\mathsf E_\om))$-{\sf stratified} relative (to their boundaries) simplicial norms of small $j$-disks, each one being normal to a particular connected component of the $(2n+1-j)$-dimensional strata $$\Big\{\mathsf E^\circ_{\om}(\d_u, \hat\om),\; \tau(\mathsf E^\circ_{\om}(\d_u, \hat\om)),\; \d \mathsf E_{\om}(\d_u, \hat\om) \Big\}
$$ 
in the double $D(\mathsf E_{\om})$. The maximum is taken over all pairs $\om \preceq \hat \om$, where $\om \in \mathbf\Omega^\bullet_{'\langle 2n-2]}$.
%
\smallskip

Now we are ready to verify that the hypotheses of Theorem \ref{th11.16} imply the hypotheses of Theorem 4.2 from \cite{K3}. \smallskip

Consider the tangent $(n-1)$-spherical fibration $p: SM \to M$ and its double, the fibration $q: D(SM) \to DM$. We denote by $\d_\a M$ a typical connected component of $\d M$. 

Using the long homotopy sequences of the fibration $p$ and of its restriction to $\d_\a M$, we notice that $p_\ast: \pi_1(p^{-1}(\d_\a M)) \approx \pi_1(\d_\a M)$ and $p_\ast: \pi_1(SM) \approx \pi_1(M)$ since $\pi_1(S^{n-1}) = 0$ for $n\geq 3$ and $\pi_0(\text{spherical fiber}) \approx \pi_0(\text{total space})$.  

Consider the square diagram that is formed by these four fundamental groups, where the two vertical homomorphisms are induced by the inclusions $\d_\a M \hookrightarrow M$ and $p^{-1}(\d_\a M) \hookrightarrow SM$. Using its commutativity, we conclude that, if the image of $\pi_1(\d_\a M) \to \pi_1(M)$ is an amenable group, so is the image of  $\pi_1(p^{-1}(\d_\a M)) \to \pi_1(SM)$. Similarly, if the image of $\pi_1(\d_\a M) \to \pi_1(DM)$ is an amenable group, so is the image of  $\pi_1(q^{-1}(\d_\a M)) \to \pi_1(D(SM))$. \smallskip

By the Universal Coefficient Theorem, the natural homomorphism $\mu: H_k(X; \Z) \to H_k(X; \R)$ generates an isomorphism $H_k(X; \Z) \otimes \R \approx H_k(X; \R)$ for any $CW$-complex $X$. So it suffices to check the desired properties of the simplicial norm on the integral lattice $\mu(H_k(X; \Z)) \subset H_k(X; \R)$, where $X = M, SM, DM, D(SM)$, and extend them by linearity. 
\smallskip 

Recall that an odd multiple $[f]$ of every integral homology class $[h] \in H_k(M; \Z)$ can be realized by a continuous map $f: N \to M$, where $N$ is a closed orientable $k$-dimensional smooth (and thus $\mathsf{PL}$-) manifold \cite{CF}. 
Each singular cycle $f: N \to M$ induces a spherical fibration $f^\ast(p): SN \to N$, the pull-back of the fibration $p: SM \to M$ under $f$. Thus $f$ induces the canonical map $F_f: SN \to SM$, which is viewed as a singular cycle on $SM$. We denote by $[F_f] \in H_{k+ n-1}(SM; \Z)$ the homology class of the singular cycle $F_f$.  

We claim that, for $n \geq 3$ and any $f$, the simplicial semi-norm $\| [F_f]\|_{\mathbf \D} =0$.
Indeed, 
 by an argument above, $p_\ast: \pi_1(SM) \to \pi_1(M)$ is an isomorphism.

We denote by $P: BM \to M$ the disk bundle, associated with the bundle $p: SM \to M$. Let $f^\ast(P): BN \to N$ be the pull-back of $P$ under $f$. So we get a canonical map $G_f: BN \to BM$. 

The space $SM$ of the spherical bundle $p$ is the boundary of the disk bundle $P: BM \to M$, and the space $SN$ of the spherical bundle $f^\ast(p)$ is the boundary of the disk bundle $f^\ast(P): BN \to N$. 
Therefore the singular cycle 
$H_f: SN  \stackrel{F_f}{\rightarrow} SM \stackrel{p}{\rightarrow} M$ is the boundary of a singular chain $BN  \stackrel{G_f}{\rightarrow} BM \stackrel{P}{\rightarrow} M$, and thus $[H_f] = p^\ast([F_f])$ vanishes in $H_{k + n - 1}(M; \Z)$. By \cite{Gr}, continuous maps of spaces that induce isomorphisms of the fundamental groups are \emph{isometries} in the simplicial semi-norms $\| \sim \|_{\mathbf\D}$ in their homologies. Therefore, using that $p$ induces isometries, we conclude that $\| \, [H_f] \,\|_{\mathbf\D} = \| \, \mathbf 0 \,\|_{\mathbf\D}= 0$ implies $\| \, [F_f] \,\|_{\mathbf\D} = 0$. 
\smallskip

Since $M$ is of a homotopy type of an $(n-1)$-dimensional $CW$-complex (recall that $\d M \neq \emptyset$), the fibration $p: SM \to M$ admits a section $\s: M \to SM$. With the help of $\s$, $H_\ast(M; \Z)$ is direct summand of $H_\ast(SM; \Z)$. By the Leray-Hirsh Theorem (see Theorem 4D.1 in \cite{Hat}), the existence of $\s$ implies the isomorphism $H_\ast(SM; \Z) \approx H_\ast(S^{n-1};\Z) \otimes H_\ast(M;\Z)$. In other words, $H_k(SM; \Z) \approx H_k(M; \Z) \oplus H_{k - n +1}(M; \Z)$ for all $k \in [0, 2n-1]$. The first summand is delivered by $\s_\ast: H_\ast(M;\Z) \to H_\ast(SM; \Z) $. Up to an odd multiple, the homology classes of the second summand, viewed as elements of $H_k(SM; \Z)$ that belong to $\ker(p_\ast)$, are realizable by singular cycles of the form $F_f$.  By the previous argument, their simplicial norms vanish. 
\smallskip

Therefore we get the canonical \emph{isomorphism} $p_\ast: H^{\mathbf\D}_\ast(SM) \approx H^{\mathbf\D}_\ast(M)$ of the two quotient spaces, viewed as \emph{normed spaces}. In particular, $H^{\mathbf\D}_\ast(SM) = 0$ for $j \geq n$.
\smallskip

Similar arguments, applied to the spherical fibration $q: D(SM) \to DM$, lead to the isometry $q_\ast: H^{\mathbf\D}_k(D(SM)) \approx H^{\mathbf\D}_k(DM)$ for all $k \in [0, n]$. Indeed, the spherical fibration $q: D(SM) \to DM$ admits a section $D\s$, the double of the section $\s$ of $p$. (Note that the tangent spherical bundle $S(DM) \to DM$ may not have a section.) In particular, $H^{\mathbf\D}_\ast(D(SM)) = 0$ for $j > n$.




Thus we proved that the hypotheses of Theorem \ref{th11.16} imply the validity of the hypotheses of Theorem 4.2  from \cite{K3}.
This completes the proof of the theorem.
\hfill 
\end{proof}
\begin{corollary}\label{corAAA} Let $(N, g)$ be a closed connected smooth Riemannian  
$n$-manifold, where $n \geq 3$. Let $U \subset N$ be a codimension zero submanifold with a smooth boundary so that $U$ is contained in a topological $n$-ball. Put $M =_{\mathsf{def}} N \setminus \textup{int}(U)$, and let us assume that the metric $g|_M$ is traversally generic. 

Then the number of connected codimension $n$ components of the $\mathcal S^\bullet_{v^g}(D(SM))$-stratification of $D(SM)$ exceeds $\lambda^{-1} \cdot \| [DM] \|_{\mathbf\D}$, where $\lambda \geq 1$ depends only on $n$. 

In particular, if $\| [DM] \|_{\mathbf\D} \neq 0$, there exists a $(n-1)$-dimensional family of geodesics $\g$ in $M$ such that their reduced multiplicity $m'(\g) = n$ (see formula (\ref{eq2.1})). \hfill $\diamondsuit$
\end{corollary}

\noindent{\bf Remark 3.1.} Note that, by the previous arguments, $\| [D(SM)] \|_{\mathbf\D} = 0$. Therefore Theorem \ref{th11.16} does not tell anything about the number of codimension $2n -1$ connected components of the $\mathcal S^\bullet_{v^g}(D(SM))$-stratification of $D(SM)$. These arise from the finitely many geodesics $\g$ that have the \emph{maximal} reduced multiplicity $m'(\g) = 2n-2$ to the boundary $\d M$. 
\hfill $\diamondsuit$ \smallskip

\noindent {\bf Example 3.1.} Let $(N, g)$ be a closed hyperbolic $n$-manifold, $n \geq 3$. 
Let  $U$ be a smooth $n$-ball in $N$. Put $M = N \setminus U$. We assume that $g|_M$ is traversally generic. 

Then $DM = N \# N$, where ``$\#$" stands for the connected sum. For $n \geq 3$, we get $\| [DM] \|_{\mathbf\D} = 2\cdot \| [N] \|_{\mathbf\D}$ (\cite{Gr}, Section 3.5). By the hyperbolicity of $N$,  $$\| [N] \|_{\mathbf\D} = vol_{\mathsf{hyp}}(N)/vol_{\mathsf{hyp}}(\D^n),$$ the normalized hyperbolic volume of $N$. Here $vol_{\mathsf{hyp}}(\D^n)$ stands for the hyperbolic volume of an ideal simplex $\D^n$ in the hyperbolic space. In such a case, the number of codimension $n$ connected components of the $\mathcal S^\bullet_{v^g}(D(SM))$-stratification of $D(SM)$ exceeds $$2\big(\lambda \cdot vol_{\mathsf{hyp}}(\D^n)\big)^{-1} \cdot vol_{\mathsf{hyp}}(N) > 0.$$ 

In particular,  there exists a $(n-1)$-dimensional family of geodesics $\g$ in $M$ such that their reduced multiplicity $m'(\g) = n$.

Therefore, for a given family of closed hyperbolic $n$-manifolds $\{N_k\}_{k \to \infty}$ of increasing volumes and the corresponding compact hyperbolic manifolds $M_k = N_k \setminus U_k$, the number of codimension $n$ connected components of the $\mathcal S^\bullet_{v^g}(D(SM))$-stratification of $D(SM_k)$ grows at least as fast as $\{vol_{\mathsf{hyp}}(N_k)\}_{k \to \infty}$.\smallskip
\hfill $\diamondsuit$ 
\smallskip

\begin{theorem}\label{th11.16a} For any metric $g \in \mathcal G^\ddagger(M)$, the localized Poincar\'{e} Duality operators $\mathcal L_{j}(g; \R)$ and $\mathcal M_{j}(g; \R)$ in (\ref{eq11.AA}), as well as the simplicial semi-norms $\| \sim \|_{\mathbf \D}$ in their source spaces, and the norms $\|\, \sim \, \|_{\mathbf\mho}^\bullet$ in their target spaces, can be reconstructed from the geodesic scattering map $C_{v^g}: \d_1^+(SM) \to \d_1^-(SM)$.  
\end{theorem}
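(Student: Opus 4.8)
The plan is to assemble the reconstruction from three separate ingredients, each of which has already been set up in the excerpt. First, the operators $\mathcal L_j(g;\R)$ and $\mathcal M_j(g;\R)$ themselves: by (\ref{eq11.AA}) these are compositions of the Poincar\'{e} Duality isomorphisms $\mathcal D$ with the localization maps $i^\ast_{\mathsf{loc}}$ and with the identifications of the localized cohomology with the quotient complexes $C_\mho^\ast/B_\mho^\ast$. The first two arrows are determined by the (ordinary) homotopy type of $SM$ and $D(SM)$ together with their fundamental classes, hence \emph{a fortiori} by the stratified topological types; the last arrow is exactly the content of Theorem \ref{th11.15}, which already tells us that the differential complexes $\mathbf C_\mho^\ast(SM^\circ, g;\mathsf R)$ and $\mathbf C_\mho^\ast(D(SM), g;\mathsf R)$, together with their subspaces $B_\mho^\ast$, are reconstructible from $C_{v^g}$. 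So I would begin by invoking Theorem 3.3 from \cite{K5} (the Holographic Principle for geodesic flows, as already used in the proof of Theorem \ref{th11.15}) to recover the pair $(SM, \mathcal F(v^g))$ up to a stratification-preserving, boundary-fixing homeomorphism, and then recover $D(SM)$ with its $\rho_{SM}$-equivariant stratification by doubling.

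Second, the norms $\|\,\sim\,\|_\mho^\bullet$ on the target spaces. By construction these are the quotient $l_1$-norms coming from the distinguished bases of the lattices $C_\mho^j(SM,g;\Z)$ and $C_\mho^j(D(SM),g;\Z)$, whose basis vectors are indexed by the connected components of the codimension-$j$ strata of $\mathcal S^\bullet_{v^g}(SM)$ (resp. $\mathcal S^\bullet_{v^g}(D(SM))$), each double-stratum counted twice together with the boundary strata of codimension $j-1$. Since the reconstructed homeomorphism preserves exactly this stratified structure, it carries the distinguished basis to the distinguished basis, hence is an isometry for $\|\,\sim\,\|_\mho$, and therefore descends to an isometry of the quotient norms $\|\,\sim\,\|_\mho^\bullet$; equivalently it carries the perfect polyhedra $\diamondsuit_\mho^j(\sim,g)$ to themselves. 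So this ingredient follows formally once Theorem \ref{th11.15} is in hand.

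Third, the simplicial semi-norms $\|\,\sim\,\|_{\mathbf\D}$ on the source spaces $H_j(SM;\R)$ and $H_j(D(SM);\R)$. Here the point is that the simplicial semi-norm is a \emph{homotopy invariant}: it is computed from the singular chain complex, and in fact (as recalled in the proof of Theorem \ref{th11.16}, following \cite{Gr}) it is invariant under maps inducing isomorphisms on $\pi_1$. The reconstructed homeomorphism of $SM$ (resp. $D(SM)$) is in particular a homotopy equivalence, so it preserves $\|\,\sim\,\|_{\mathbf\D}$ on $H_j(\sim;\R)$, hence also the induced norm on the reduced quotient $H^{\mathbf\D}_j(\sim;\R)$ and the balls $\tilde{\mathsf B}^{\mathbf\D}_j(\sim)$. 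Finally, I would check that all three ingredients are compatible — i.e. that the reconstructed homeomorphism intertwines the operators $\mathcal L_j, \mathcal M_j$ on the reconstructed source and target — which is automatic because $\mathcal D$ is natural and the homeomorphism is boundary-preserving and orientation-compatible (it may be taken to respect the fundamental classes $[SM]$, $[SM,\d(SM)]$, and their doubles). Assembling these yields the claim.

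The main obstacle, as the footnote to Theorem \ref{th11.16} already hints, is ensuring that the homeomorphism produced by the Holographic Principle really is fine enough: one needs it not merely to preserve the coarse stratification $\mathcal S_{v^g}(SM) = \{\d(SM)(v^g,\omega),\, SM^\circ(v^g,\omega)\}_\omega$ but its refinement $\mathcal S^\bullet_{v^g}(SM)$ by \emph{connected components}, since the $l_1$-norms $\|\,\sim\,\|_\mho$ are defined via the component-indexed bases. This is exactly what Theorem 3.3 from \cite{K5} (as quoted in the proof of Theorem \ref{th11.15}) delivers; so, strictly speaking, the role of this proof is to observe that the additional structure — Poincar\'{e} Duality operators plus the two families of norms — rides along for free once the stratified homeomorphism type is known, and that nothing in that structure uses the geometry of $g$ beyond what the scattering map $C_{v^g}$ already encodes.
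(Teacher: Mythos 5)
Your proposal is correct and follows essentially the same route as the paper's own proof: invoke the Holographic Principle (Theorem 3.3 of \cite{K7}, quoted via Theorem \ref{th11.15}) to recover the $\mathcal S^\bullet_{v^g}$-stratified topological types of $SM$ and, via the involution, of $D(SM)$ from $C_{v^g}$, and then observe that the operators $\mathcal L_j(g;\R)$, $\mathcal M_j(g;\R)$, the simplicial semi-norms on the source, and the norms $\|\,\sim\,\|_\mho^\bullet$ on the target all depend only on those stratified topological types. Your extra remarks on the component-refined stratification and on naturality of $\mathcal D$ merely spell out what the paper asserts in passing, so there is no substantive difference.
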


\begin{proof}  By Theorem 3.3 from \cite{K5} (see also \cite{K6}), the scattering map $C_{v^g}$ allows for a reconstruction of the pair $(SM, \mathcal F(v^g))$, up to a stratification-preserving homeomorphism of $SM$ which is the identity on $\d(SM)$. Therefore, with the help of the involution $\tau: D(SM) \to D(SM)$, we get also a reconstruction of the stratified topological type of the double $D(SM)$. 

The localized Poincar\'{e} Duality operators $\mathcal L_{j}(g; \R)$ and $\mathcal M_{j}(g; \R)$ also depend only on the $\mathcal S^\bullet_{v^g}(D(SM))$-stratified and
 $\mathcal S^\bullet_{v^g}(SM^\circ)$-stratified topological types of $D(SM)$ and $SM^\circ$, respectively. As a result, $\mathcal L_{j}(g; \R)$ and $\mathcal M_{j}(g; \R)$ can be recovered from $C_{v^g}$.\smallskip

The semi-norm $\| \sim \|_{\mathbf \D}$ of a given homology class is an invariant of the topological type of the underlying space. Therefore (in accordance with Theorem 3.5 from \cite{K5}), the semi-norms $\| \sim \|_{\mathbf \D}$ on the source spaces of $\mathcal L_{j}(g; \R)$ and $\mathcal M_{j}(g; \R)$ can be recovered from $C_{v^g}$. The norm $\|\, \sim \, \|_{\mathbf\mho}^\bullet$ on the target spaces is defined also solely in terms of the stratified topological types of $D(SM)$ and $SM^\circ$, and therefore, by Theorem 3.3 from \cite{K5}, depends on the scattering map $C_{v^g}$ only.
\end{proof}
\begin{theorem}\label{th11.17}  Let $M$ be a compact connected smooth $n$-manifold with boundary, where $n \geq 3$. Let $j \in [0, n]$.
\begin{itemize}
\item Assume that, for each connected component of the boundary $\d M$, the image of its fundamental group in $\pi_1(M)$ is amenable. 

Then, for any traversally generic Riemannian metric $g$ on $M$, the number of $(2n-1 -j)$-dimensional connected components in the stratification $\{SM^\circ(v^g, \om)\}_{\om}$ is greater than or equal to $\textup{rank} \big(H^{\mathbf\D}_{j}(M)\big)$. \smallskip

\item Assume that, for each connected component of the boundary $\d M$, the image of its fundamental group in $\pi_1(DM)$ is amenable. 
\smallskip

Then, for any traversally generic Riemannian metric $g$ on $M$, the number of $(2n-1 -j)$-dimensional connected components in the stratification $\{D(SM)(v^g, \om)\}_{\om}$ is greater than or equal to $\textup{rank} \big(H^{\mathbf\D}_{j}(DM)\big)$. 
\end{itemize}
\end{theorem}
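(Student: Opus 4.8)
The plan is to obtain Theorem~\ref{th11.17} as a formal consequence of Theorem~\ref{th11.16}, together with the two isometries of normed spaces $p_\ast\colon H^{\mathbf\D}_\ast(SM)\approx H^{\mathbf\D}_\ast(M)$ and $q_\ast\colon H^{\mathbf\D}_\ast(D(SM))\approx H^{\mathbf\D}_\ast(DM)$ that were established inside the proof of Theorem~\ref{th11.16}. I will describe the argument for the first bullet (about the stratification of $SM^\circ$); the second bullet follows \emph{mutatis mutandis}, replacing the data $(SM^\circ,\, M,\, \mathcal M_{j}(g;\R),\, \mu,\, p_\ast)$ by $(D(SM),\, DM,\, \mathcal L_{j}(g;\R),\, \lambda,\, q_\ast)$ and invoking the \emph{first} bullet of Theorem~\ref{th11.16} under the amenability hypothesis in $\pi_1(DM)$.

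First I would read off a bound on the kernel of the localized duality operator. Under the amenability assumption on the image of each $\pi_1(\d_\a M)$ in $\pi_1(M)$ and for $j\in[0,n-1]$, the second bullet of Theorem~\ref{th11.16} supplies a universal constant $\mu\ge 1$ with $\|h\|_{\mathbf\D}\le \mu\cdot\|\mathcal M_{j}(g;\R)(h)\|_\mho^\bullet$ for all $h\in H_j(SM;\R)$. Since the norm of the zero vector is $0$, this forces $\ker\,\mathcal M_{j}(g;\R)\subseteq H_j^{\{\|\sim\|_{\mathbf\D}=0\}}(SM;\R)$. By the rank--nullity theorem the rank of the image of $\mathcal M_{j}(g;\R)$ is therefore at least $\dim_\R H_j(SM;\R)-\dim_\R H_j^{\{\|\sim\|_{\mathbf\D}=0\}}(SM;\R)=\textup{rank}\big(H^{\mathbf\D}_{j}(SM)\big)$, which by the isometry $p_\ast$ equals $\textup{rank}\big(H^{\mathbf\D}_{j}(M)\big)$.

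Second, I would bound the dimension of the \emph{target} of $\mathcal M_{j}(g;\R)$ by the combinatorial quantity in the statement. By the identifications in (\ref{eq11.AA}), the image of $\mathcal M_{j}(g;\R)$ lies in the quotient $C_\mho^{2n-1-j}(SM^\circ,g;\R)\big/B_\mho^{2n-1-j}(SM^\circ,g;\R)$, whose dimension is at most $\dim_\R C_\mho^{2n-1-j}(SM^\circ,g;\R)$. By the construction recalled before (\ref{eq11.AA}) (and in detail in \cite{K4}), the group $C_\mho^{2n-1-j}(SM^\circ,g;\R)=H^{2n-1-j}\big(SM_{-j}(g),\; SM_{-(j+1)}(g)\cup(SM_{-j}(g)\cap\d(SM));\R\big)$ is the real span of a canonical basis indexed by the connected components of the pure codimension-$j$ strata $\{SM(v^g,\omega)\cap\textup{int}(SM)\}_{|\omega|'=j}$; as $\dim SM=2n-1$, these components are exactly the $(2n-1-j)$-dimensional connected components of the stratification $\{SM^\circ(v^g,\omega)\}_\omega$. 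Combining the two bounds gives
\[
\#\{(2n-1-j)\text{-dimensional connected components of }\{SM^\circ(v^g,\omega)\}_\omega\}\;=\;\dim_\R C_\mho^{2n-1-j}(SM^\circ,g;\R)\;\ge\;\textup{rank}\big(H^{\mathbf\D}_{j}(M)\big),
\]
which is the asserted inequality. For the second bullet the analogous target is $C_\mho^{2n-1-j}(D(SM),g;\R)/B_\mho^{2n-1-j}(D(SM),g;\R)$, whose ambient group is spanned by a basis indexed by the $(2n-1-j)$-dimensional connected components of $\{D(SM)(v^g,\omega)\}_\omega$ (each interior stratum counted twice, together with the boundary strata of reduced norm $j-1$), and one concludes in the same way using $\mathcal L_{j}(g;\R)$, $\lambda$, and $q_\ast$.

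The whole argument is bookkeeping once Theorem~\ref{th11.16} is in hand. I expect the only points demanding a little care to be: (i) matching the two amenability hypotheses and the index ranges of Theorem~\ref{th11.17} against the correct bullets of Theorem~\ref{th11.16} (amenability in $\pi_1(M)$ with $\mathcal M_{j}$ and $j\le n-1$; amenability in $\pi_1(DM)$ with $\mathcal L_{j}$ and $j\le n$); and (ii) the identification of $\dim_\R C_\mho^{2n-1-j}(\,\sim,g;\R)$ with the number of $(2n-1-j)$-dimensional connected strata, which comes down to Lefschetz duality for the $(2n-1-j)$-manifold $D(SM)_{-j}(g)\setminus D(SM)_{-(j+1)}(g)$ (resp.\ its $SM$-analogue) and is precisely the canonical-basis fact recorded before (\ref{eq11.AA}). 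I do not anticipate a genuine obstacle beyond what is already absorbed into Theorem~\ref{th11.16}.
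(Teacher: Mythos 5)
Your argument coincides with the paper's own proof: both deduce from Theorem~\ref{th11.16} that the kernel of the localized duality operator is contained in the zero-semi-norm subspace, then bound the rank of its image by the dimension of the chain group $C_\mho^{2n-1-j}(\sim, g;\R)$, identified with the count of codimension-$j$ connected strata. The only point you make more explicit is the final invocation of the isometries $p_\ast\colon H^{\mathbf\D}_j(SM)\approx H^{\mathbf\D}_j(M)$ and $q_\ast\colon H^{\mathbf\D}_j(D(SM))\approx H^{\mathbf\D}_j(DM)$ (established inside the proof of Theorem~\ref{th11.16}) to convert the bound into the form stated in the theorem; the paper leaves this last step implicit.
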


\begin{proof} By Theorem \ref{th11.16}, the kernel $\ker(\mathcal L_j(g))$ is contained in the subspace $$H_j^{\{\mathbf\| \sim \| = 0\}}(D(SM); \R \subset H_j(D(SM); \R).$$ Therefore, employing (\ref{eq11.AA}), we get
\[ 
\textup{rank}(H^{\mathbf \D}_j(D(SM); \R) \leq \textup{rank}\big(H_j(D(SM); \R) \big/ \ker(\mathcal L_j(g)\big) = \textup{rank}(\textup{im}\big(\mathcal L_j(g)\big)
\]
\[ \leq \textup{rank}\big(C_{\mathbf\mho}^{2n -1 -j}(D(SM), g)\Big/ B_{\mathbf\mho}^{2n -1 -j}(D(SM), g)\big) 
\leq \textup{rank}\big(C_{\mathbf\mho}^{2n -1 -j}(D(SM), g)\big),
\]
the number of codimension $j$ connected components in the stratification $\{D(SM)(v^g, \om)\}_{\om}$.
Similarly, 
\[ 
\textup{rank}(H^{\mathbf \D}_j(SM; \R) \leq \textup{rank}\big(C_{\mathbf\mho}^{2n -1 -j}(SM, g)\big),
\]
the number of codimension $j$ connected components in the stratification $\{SM^\circ(v^g, \om)\}_{\om}$. \smallskip

Using that  $p_\ast: H^{\mathbf\D}_j(SM) \approx H^{\mathbf\D}_j(M)$ (for $j \geq n$, both spaces are trivial) and  $q_\ast: H^{\mathbf\D}_j(D(SM)) \approx H^{\mathbf\D}_j(DM)$ (for $j > n$, both spaces are trivial) are isometric, the claim follows.
\hfill 
\end{proof}

The next corollary spells out the claims of Theorem \ref{th11.17} by applying the formula on the bottom of page 537 in \cite{K3} to the settings of the theorem.

\begin{corollary}\label{cor11.A} Let $M$ be a compact connected smooth $n$-manifold with boundary,  and $g$ a traversally generic metric on $M$. Let $n \geq 3$. 

\begin{itemize}
\item Assume that, for each connected component of the boundary $\d M$, the image of its fundamental group in $\pi_1(M)$ is amenable. \smallskip

Then, for each $j \in [0, n-1]$, the space of geodesics $\mathcal T(v^g)$ has the property
\[ \sum_{\om \in \mathbf\Omega \big | |\om|' = j} \sup(\om) \cdot \#\Big(\pi_0\big(\mathcal T(v^g, \om)\big)\Big) \geq \, \textup{rank} \big(H^{\mathbf\D}_{j}(M)\big),
\]
where $\mathcal T(v^g, \om)$ stands for the family of geodesics $\g \subset M$ whose intersections $\g \cap \d M$ generate the combinatorial tangency pattern $\om$,  $\sup(\om) =_{\mathsf{def}} |\om| - |\om|'$, and $\pi_0(\sim)$ denotes the set of path connected components of the appropriate space. 
\smallskip

\item Assume that, for each connected component of the boundary $\d M$, the image of its fundamental group in $\pi_1(DM)$ is amenable. \smallskip

Then, for each $j \in [0, n]$, the space of geodesics $\mathcal T(v^g)$ has the property
\[ \sum_{\om \in \mathbf\Omega \big | |\om|' = j} \sup(\om) \cdot \#\Big(\pi_0\big(\mathcal T(v^g, \om)\big)\Big) \; +
\]
\[+\; 2 \cdot \sum_{\hat\om \in \mathbf\Omega \big | |\hat\om|' = j+1} (\sup(\hat\om) - 1) \cdot \#\Big(\pi_0\big(\mathcal T(v^g, \om)\big)\Big)
 \geq \; \textup{rank} \big(H^{\mathbf\D}_{j}(DM)\big). 
 \qquad \diamondsuit
\]
\end{itemize}
\end{corollary}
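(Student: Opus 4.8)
The plan is to derive Corollary \ref{cor11.A} directly from Theorem \ref{th11.17} by unwinding the definition of the lattices $C_\mho^{2n-1-j}(SM^\circ, g; \Z)$ and $C_\mho^{2n-1-j}(D(SM), g; \Z)$ in terms of connected components of strata, and then translating the counts of components of the $\Omega^\bullet_{'\langle 2n-2]}$-stratification of $SM$ into counts of components of the induced stratification of $\mathcal T(v^g)$, using the fibering structure $\Gamma : SM \to \mathcal T(v^g)$. First I would recall from the discussion preceding Theorem \ref{th11.16} that $\textup{rank}\big(C_\mho^{2n-1-j}(SM, g)\big)$ equals the number of connected components of $\{SM(v^g,\omega)\cap\textup{int}(SM)\}_{\omega \,:\, |\omega|'=j}$, and that $\textup{rank}\big(C_\mho^{2n-1-j}(D(SM), g)\big)$ equals the number of connected components of $\{SM(v^g,\omega)\}_{|\omega|'=j}$ counted twice, plus the number of connected components of $\{SM(v^g,\omega)\cap\d(SM)\}_{|\omega|'=j-1}$. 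Combined with Theorem \ref{th11.17}, this already gives the bounds in terms of strata of $SM$ and $D(SM)$; the remaining task is purely bookkeeping to re-express these in terms of the geodesic families $\mathcal T(v^g,\omega)$.

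The bookkeeping step is the heart of the matter. For a fixed $\omega=(j_1,\dots,j_i,\dots)$ with $|\omega|'=k$, a $v^g$-trajectory $\g$ of type $\omega$ meets $\d(SM)$ in $\sup(\omega)=|\omega|-|\omega|'$ points, of which two have odd multiplicity and the rest even (as recalled after (\ref{eq2.2})). The stratum $SM(v^g,\omega)$ is, via $\Gamma$, a fibration over $\mathcal T(v^g,\omega)$ with fiber a closed interval (the trajectory), so $SM^\circ(v^g,\omega)$ has the same number of connected components as $\mathcal T(v^g,\omega)$, whence $\#\pi_0(SM^\circ(v^g,\omega))=\#\pi_0(\mathcal T(v^g,\omega))$. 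Meanwhile $SM(v^g,\omega)\cap\d(SM)$ is the union of the $\sup(\omega)$ boundary points along each trajectory, so it fibers over $\mathcal T(v^g,\omega)$ with a $\sup(\omega)$-point fiber; hence $\#\pi_0(SM(v^g,\omega)\cap\d(SM)) = \sup(\omega)\cdot\#\pi_0(\mathcal T(v^g,\omega))$ (here one uses that the $\sup(\omega)$ boundary points stay in distinct sheets over each component of $\mathcal T(v^g,\omega)$, which follows from the model coordinates (\ref{eq2.4}) where the $i$-th root is visibly separated). Wait---I should be careful: for the $D(SM)$ count the relevant boundary strata are those of codimension $j$ in $D(SM)$, i.e. the $\d(SM)(v^g,\hat\omega)$ with $|\hat\omega|'=j+1$ (since crossing the boundary raises the reduced multiplicity by one, matching the indexing $SM_{-(j+1)}(g)$ in the filtration), and each such trajectory contributes $\sup(\hat\omega)-1$ boundary strata of that codimension because the two endpoints of odd multiplicity are folded together under the doubling while the $\sup(\hat\omega)-2$ even-multiplicity interior tangencies each give one, plus one more from the fold; this is exactly the combinatorial identity used on the bottom of page 537 of \cite{K4}. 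Assembling: the $SM$-count gives $\sum_{|\omega|'=j}\sup(\omega)\cdot\#\pi_0(\mathcal T(v^g,\omega))$ wait, no---for $SM^\circ$ alone, codimension $j$ strata of $SM^\circ$ are the $SM^\circ(v^g,\omega)$ with $|\omega|'=j$, contributing $\sum_{|\omega|'=j}\#\pi_0(\mathcal T(v^g,\omega))$; but the stated corollary has the extra factor $\sup(\omega)$, so the correct reading is that Theorem \ref{th11.17}'s bound is restated after also incorporating the boundary components, i.e. one applies the page-537 formula of \cite{K4} which packages codimension-$j$ strata of $SM$ (not $SM^\circ$) --- this is precisely the ``spells out'' in the sentence introducing the corollary. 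So I would just cite that formula and substitute $X=SM$, $v=v^g$, $d+1=2n-1$.

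The expected main obstacle is getting the combinatorial weights exactly right---in particular verifying that a codimension-$j$ boundary stratum of $D(SM)$ arises from trajectories of type $\hat\omega$ with $|\hat\omega|'=j+1$ and that each such trajectory contributes $\sup(\hat\omega)-1$ distinct components (the coefficient $2$ in front of the $\hat\omega$-sum reflecting the double cover $D(SM)\to SM$ away from $\d(SM)$, while $\d(SM)$ itself is fixed by $\tau$). I would resolve this by appealing to the already-established identity in \cite{K4}, page 537, rather than re-deriving it: the point of the corollary, as stated, is simply to substitute the geodesic flow $v^g$ on $X=SM$ into that universal formula and to use the interval-fibration $\Gamma$ to rewrite $\#\pi_0$ of strata of $SM$ in terms of $\#\pi_0$ of strata of $\mathcal T(v^g)$. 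Everything else---the containment $\ker\mathcal L_j(g)\subset H_j^{\{\|\sim\|_\D=0\}}$, the rank inequalities, and the identification of $\textup{rank}\,C_\mho^{\bullet}$ with numbers of components---is supplied verbatim by Theorem \ref{th11.16} and Theorem \ref{th11.17}, together with the isometries $H_\ast^{\mathbf\D}(SM)\cong H_\ast^{\mathbf\D}(M)$ and $H_\ast^{\mathbf\D}(D(SM))\cong H_\ast^{\mathbf\D}(DM)$ proved inside Theorem \ref{th11.16}, which let us replace $H^{\mathbf\D}_j(SM)$ and $H^{\mathbf\D}_j(D(SM))$ by $H^{\mathbf\D}_j(M)$ and $H^{\mathbf\D}_j(DM)$ in the final bounds.
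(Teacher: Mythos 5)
Your overall route is the same as the paper's: the paper offers no independent argument for Corollary \ref{cor11.A} beyond the remark that it ``spells out'' Theorem \ref{th11.17} by applying the formula at the bottom of page 537 of \cite{K4}, and your plan --- Theorem \ref{th11.17} (hence Theorem \ref{th11.16} and the isometries $H^{\mathbf\D}_\ast(SM)\approx H^{\mathbf\D}_\ast(M)$ and $H^{\mathbf\D}_\ast(D(SM))\approx H^{\mathbf\D}_\ast(DM)$ established in its proof), the identification of $\textup{rank}\,C_\mho^{2n-1-j}$ with the number of connected components of the codimension-$j$ strata, and then the cited formula from \cite{K4} to convert components of strata of $SM$ and $D(SM)$ into components of the geodesic families $\mathcal T(v^g,\omega)$ with the weights $\sup(\omega)$ and $\sup(\hat\omega)-1$ --- is exactly that reduction. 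Since you, like the paper, ultimately defer to the identity in \cite{K4} rather than re-derive it, the structure of your argument matches the intended one.

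That said, the ad hoc combinatorics you interleave (and then discard in favor of the citation) is not correct, and it would matter if you had to verify the translation yourself. First, $\Gamma$ restricted to $SM^\circ(v^g,\omega)$ does not have connected fibers: a trajectory of type $\omega$ meets $\d(SM)$ in $\sup(\omega)$ points, which cut the closed interval into $\sup(\omega)-1$ open arcs, so $\#\pi_0\big(SM^\circ(v^g,\omega)\big)=(\sup(\omega)-1)\cdot\#\pi_0\big(\mathcal T(v^g,\omega)\big)$ rather than $\#\pi_0\big(\mathcal T(v^g,\omega)\big)$ (your count of $\sup(\omega)$ sheets for $SM(v^g,\omega)\cap\d(SM)$ is correct). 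Second, nothing is ``folded'' in the double: $D(SM)$ is two copies of $SM$ glued along $\d(SM)$, the points of $\g\cap\d(SM)$ remain distinct, and $\d(SM)(v^g,\hat\omega)$ is a $\sup(\hat\omega)$-sheeted cover of $\mathcal T(v^g,\hat\omega)$; moreover its dimension equals $\dim\mathcal T(v^g,\hat\omega)=2n-2-|\hat\omega|'$, so its codimension in $D(SM)$ is $|\hat\omega|'+1$, and codimension-$j$ boundary strata therefore come from patterns with reduced norm $j-1$, not $j+1$ as you assert. So your heuristic justification of the coefficients $\sup(\omega)$, $\sup(\hat\omega)-1$ and of the index ranges does not stand on its own; pinning down the exact weights and index bookkeeping is precisely what the appeal to \cite{K4}, page 537, is for, and your written proof is only acceptable because it rests on that citation rather than on the intermediate claims.
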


\noindent {\bf Example 3.2.} Consider a collection $\{\Sigma'_k\}_{k \in [1, s]}$ of  closed surfaces of genera $\mathsf g_k \geq 2$. Let $N'_k = \Sigma'_k \times S^2$. We denote by $N'$ the connected sum of all $N'_k$'s. Let $M'$ be the compact $4$-dimensional manifold, obtained from $N'$ by removing  a number of smooth $4$-balls $D^4$ and solid tori of the form $T^2 \times D^2$ and $S^1 \times D^3$, residing in $N'$. We assume that these domains do not intersect the surfaces $\{\Sigma'_k \times pt_k\}$, where the points $pt_k \in S^2$. Finally, let $M$ be any smooth compact $4$-dimensional manifold which is homotopy equivalent to $M'$ and such that $\d M = \d M'$. Let $H: M \to M'$ be this homotopy equivalence. We denote by $\Sigma_k$ be the $H$-preimage of $\Sigma'_k$. We may assume that $H$ is transversal to $\coprod_{k=1}^s (\Sigma'_k\times pt_k) \subset M'$. 

Then the hyperbolicity of the $\Sigma'_k$'s implies that 
$$\textup{rank} \big(H^{\mathbf\D}_{2}(SM)\big) \geq s \text{\; and \;} \textup{rank} \big(H^{\mathbf\D}_{2}(DM)\big) \geq 2s.$$

By Theorem \ref{th11.17}, for any traversally generic metric $g$ on $M$, the number of $5$-dimensional connected components in the stratification $\{SM^\circ(v^g, \om)\}_{\om}$ of $SM^\circ$ is $s$ at least. These are connected components of the strata, indexed by $\om$'s with the property $|\om|' = 2$. Such $\om$'s belong to the list $\{(1221), (13), (3,1)\}$. 
\smallskip   

Let us rephrase these conclusions in terms of $M$. For any traversally generic metric $g$ on $M$, there exists at least $s$  distinct four-dimensional families of geodesics $\g$ in $M$, such that either $\g$ is quadratically tangent to $\d M$ at a pair of distinct points, or $\g$ has a single tangency to $\d M$ of order $3$. Each family is continuously parametrized by an open smooth $4$-manifold. Different families do not share the same geodesics. \smallskip

Next, let us interpret the claims of Theorem \ref{th11.16} in this setting. Take, for example, the  $2$-cycle $h = \sum_{k=1}^s [\Sigma_k]$.  Its simplicial norm $\| h\|_{\mathsf \D} = \sum_{k=1}^s (2\mathsf g_k -2)$. 

For an universal constant $\mu \geq 1$ and any traversally generic metric $g$ on $M$,  the number of $5$-dimensional connected components in the stratification $\{SM^\circ(v^g, \om)\}_{\om}$ of $SM$, where $\om = (1221), (13), (3,1)$, is greater than or equal to $\mu^{-1} \cdot \sum_{k=1}^s (2 \mathsf g_k -2)$. Although estimating $\mu$ from above may be challenging, at least we know the rate of growth of the number of $5$-dimensional connected components in the $\mathcal S_{v^g}(SM^\circ)$-stratification, as $s \to \infty$ or as individual genus $\mathsf g_k \to \infty$. 
\hfill $\diamondsuit$ \smallskip

Revisiting Definition \ref{def2.3}, the formulas from Corollary \ref{cor11.A} have an instant implication. 

\begin{corollary}\label{cor11.B} Under the hypotheses of Theorem \ref{th11.17}, the non-triviality of the groups $H^{\mathbf\D}_{j}(M)$, where $j \in [1, n-1]$, and/or $H^{\mathbf\D}_{j}(DM)$ where $j \in [1, n]$, represents an obstruction to the existence of a globally $j$-convex and traversally generic metric on $M$. 
\hfill $\diamondsuit$
\end{corollary}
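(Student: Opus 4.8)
The plan is to read Corollary \ref{cor11.B} off Corollary \ref{cor11.A} by a contrapositive argument. Suppose $g$ is a traversally generic metric on $M$ that is globally $j$-convex in the sense of Definition \ref{def2.3}; by the remark following Definition \ref{def2.3} and the definition of the stratification of $\mathcal T(v^g)$, this means that all strata $\mathcal T(v^g, \omega)$ with $|\omega|' \geq j$ are empty. First I would observe that global $j$-convexity forces every term on the left-hand side of both displayed inequalities in Corollary \ref{cor11.A} to vanish: in the first inequality the sum runs over $\omega$ with $|\omega|' = j$, and each such stratum $\mathcal T(v^g, \omega)$ is empty, so $\#\big(\pi_0(\mathcal T(v^g,\omega))\big) = 0$; in the second inequality the sums run over $\omega$ with $|\omega|' = j$ and $\hat\omega$ with $|\hat\omega|' = j+1$, and both indices are $\geq j$, so again all the relevant trajectory strata are empty.

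Consequently the left-hand sides of the two inequalities in Corollary \ref{cor11.A} are both zero, whence $\textup{rank}\big(H^{\mathbf\D}_j(M)\big) \leq 0$ and $\textup{rank}\big(H^{\mathbf\D}_j(DM)\big) \leq 0$ under the respective amenability hypotheses on $\pi_1(M)$ and $\pi_1(DM)$. Since ranks are non-negative, this gives $H^{\mathbf\D}_j(M) = 0$ and $H^{\mathbf\D}_j(DM) = 0$. Taking the contrapositive: if $H^{\mathbf\D}_j(M) \neq 0$ (resp. $H^{\mathbf\D}_j(DM) \neq 0$), then no traversally generic metric on $M$ (satisfying the corresponding amenability hypothesis) can be globally $j$-convex. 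One subtlety worth spelling out is the range of $j$: Corollary \ref{cor11.A} gives the $M$-statement for $j \in [0, n-1]$ and the $DM$-statement for $j \in [0, n]$, so the obstruction for $H^{\mathbf\D}_j(M)$ is asserted in the range $j \in [0, n-1]$ and for $H^{\mathbf\D}_j(DM)$ in the range $j \in [0, n]$; I would either restrict the statement accordingly or note that $H^{\mathbf\D}_n(M) = 0$ automatically since $M$ has non-empty boundary and hence $H_n(M;\R)=0$, so the missing case is vacuous.

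There is essentially no hard analytic or combinatorial content here — the work was already done in Theorem \ref{th11.16} and its consequences — so the ``main obstacle'' is purely expository: making sure the logical direction is stated cleanly and that the reader sees that global $j$-convexity is exactly the emptiness of all strata of reduced codimension $\geq j$, which is what kills every summand in the Corollary \ref{cor11.A} estimates. I would keep the proof to two or three sentences and simply invoke Corollary \ref{cor11.A} together with Definition \ref{def2.3}.
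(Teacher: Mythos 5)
Your proposal is correct and is essentially the paper's own argument: the paper offers no separate proof, stating only that ``revisiting Definition \ref{def2.3}, the formulas from Corollary \ref{cor11.A} have an instant implication,'' which is precisely your contrapositive reading (global $j$-convexity empties every stratum with $|\omega|' \geq j$, killing both left-hand sides in Corollary \ref{cor11.A} and forcing the ranks to vanish). Your added remark on the range of $j$ is a harmless refinement consistent with the hypotheses of Theorem \ref{th11.17}.
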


The assumption that a metric $g$ on a given manifold $M$ is traversally generic, perhaps, could be relaxed. To extend all the results of this paper to the broader class of boundary generic and traversing geodesic flows, we need to consider only such geodesic flows that match a \emph{finite list} of a priori fixed semi-local models (in the spirit of (\ref{eq2.4})) of the vicinity of every $v^g$-trajectory $\g$. Perhaps, these models could be determined not only by the combinatorics of tangency (like $\om$) of $\g$ to $\d(SM)$, but also by some continuous parameters.

\end{document}